\documentclass{siamart}
\usepackage{makecell}
\usepackage{mathrsfs}
\usepackage{circuitikz}
\usepackage{amssymb}
\usepackage{colortbl}

\newsiamthm{assumption}{Assumption}
\newsiamthm{remark}{Remark} 

\title{Coercivity and Local Convergence of Physical Learning in Linear Circuits}
\author{Joshua A. McGinnis\footnotemark[1]
\and Xinbo Li\footnotemark[2]
\and Yoichiro Mori\footnotemark[2]\footnotemark[3]}
\headers{Coercivity and Local Convergence of Physical Learning}{J. A. McGinnis, X. Li, and Y. Mori}

\begin{document}
\maketitle

\renewcommand{\thefootnote}{\fnsymbol{footnote}}
\footnotetext[1]{Department of Mathematics, University of Pennsylvania, Philadelphia. Corresponding author (\email{jam887@sas.upenn.edu}).}
\footnotetext[2]{Department of Mathematics, University of Pennsylvania, Philadelphia.}
\footnotetext[3]{Department of Biology, University of Pennsylvania, Philadelphia.}

\begin{abstract}
Physical learning methods train physical networks to perform computational tasks using only local update rules, exploiting the physics of the system to handle the global transfer of information. We provide the first local convergence analysis of three such methods---Equilibrium Propagation (EP), Coupled Learning (CL), and a new method we call Adjoint Coupled Learning (AL)---for linear circuits, in the limit of small-nudging for both discrete and continuous time. EP and AL perform gradient descent on a natural loss function, while CL follows modified dynamics with an additional cubic correction. Assuming the existence of a solution, we identify a \emph{coercivity condition}, expressed as a rank condition on a matrix built from the network's incidence structure, under which the training loss decays exponentially and the parameters converge to the solution manifold. We show that coercivity can fail by exhibiting a kite circuit in which a symmetry causes the coercivity constant to degenerate on the solution manifold, but prove using Sard's theorem that such degeneracies are non-generic: coercivity holds at every point of the solution manifold for almost every choice of desired output.
\end{abstract}

\begin{keywords}
physical learning, equilibrium propagation, coupled learning, resistor networks, local convergence, coercivity
\end{keywords}

\begin{AMS}
68T05, 94C05, 37N40, 37C75
\end{AMS}

\section{Introduction}

Physical learning refers to a class of training methods in which a physical network is trained to perform a computational task, and the only operation required at each network parameter is a local update rule. Unlike backpropagation, which requires propagating error signals from the output back through the entire network \cite{scellier2017equilibrium, kendall2020training}, physical learning methods exploit the physics of the system itself to extract the necessary update rules. In digital machines, backpropagation requires a global computation and therefore a central processor. In physical ``learning machines'' \cite{stern2021learning}, physics handles the global transfer of information without a central processor, through equilibrium dynamics. This is conjectured to lead to enhanced energy efficiencies at large scale \cite{kendall2020training, markovic2020physics}. Physical learning is therefore a candidate for large-scale, energy-efficient machine learning \cite{markovic2020physics}. Moreover, the decentralized nature of physical learning makes it a more plausible candidate for how biological systems learn, as it requires only local operations and no centralized computation \cite{stern2023learning}.

Recent experiments have demonstrated that physical learning can be implemented in real analog devices. Dillavou et al.\ \cite{dillavou2022demonstration} trained networks of adjustable resistors using Coupled Learning, Stern et al.\ \cite{stern2020supervised} demonstrated learning in elastic networks, and Kendall et al.\ \cite{kendall2020training} showed that Equilibrium Propagation can train analog circuits. These results build on a longer tradition of energy-based learning in neural networks, going back to Boltzmann machines \cite{hinton2002training} and contrastive Hebbian learning in Hopfield networks \cite{movellan1991contrastive}. As physical learning methods move toward practical deployment at scale \cite{laborieux2021scaling}, the question of convergence becomes pressing: under what conditions is a physical system guaranteed to learn, and what structural features of the network can prevent convergence?

The two most prominent physical learning methods are \emph{Equilibrium Propagation} (EP), introduced by Scellier and Bengio \cite{scellier2017equilibrium} and extended to a model-free setting in \cite{scellier2022agnostic}, and \emph{Coupled Learning} (CL), introduced by Stern and collaborators \cite{stern2021learning}. Both operate by comparing two equilibrium states: a \emph{free state}, in which the system relaxes under its natural dynamics, and a \emph{nudged state}, in which the output is gently pushed toward a target. For reversible (Hamiltonian) systems, time-reversal of the dynamics provides an alternative to equilibrium-based methods \cite{lopezpastor2023selflearning}. The parameter update is then proportional to the difference in local quantities between these two states. EP uses the error to apply a small force to the energy, while CL uses the error to constrain the output toward the target. Constraining the output is a simpler physical operation than applying a force proportional to the error, which requires an active source \cite{stern2021learning}. In this paper, we introduce a third method, \emph{Adjoint Coupled Learning} (AL), which constrains the output to the target and nudges using the resulting constraint force. AL preserves the physical simplicity of CL while, as we show below, recovering the gradient flow structure of EP.

The gradient equivalence between contrastive update rules and backpropagation was first established by Xie and Seung \cite{xie2003equivalence} for layered networks, and generalized to arbitrary energy-based systems by Scellier and Bengio \cite{scellier2017equilibrium}, who showed that EP performs gradient descent on a natural loss function in the small-nudge limit. The analogous calculation for CL reveals that it is \emph{not} gradient flow of any apparent loss function, and the CL update computes only part of the gradient of a modified, parameter-dependent loss \cite{stern2021learning}. Despite this understanding, rigorous convergence results have been scarce. The existing literature demonstrates the gradient structure but stops short of proving that the resulting dynamics actually drive the system to a trained state. A recent exception is \cite{huijzer2026convergence}, which proves global convergence of contrastive learning for linear resistor networks without hidden nodes, where the cost function is convex; our approach is local but applies to networks with hidden nodes, where convexity is not available.

In this paper, we provide the first local convergence analysis for EP, CL, and AL in both the small-nudge, small-learning-rate (continuous time) limit, and for sufficiently small-nudge, discrete time setting. Our main results (Theorem~\ref{thm:local_convergence}, Theorem~\ref{thm:discrete_EP}, and Theorem~\ref{thm:discrete_CL}) state that if the system starts sufficiently close to a solution (a parameter configuration achieving the desired output) and a \emph{coercivity condition} is satisfied, then the loss decays exponentially and the parameters converge to the solution manifold. The proof uses a bootstrap argument, i.e.\@ the Implicit Function Theorem provides local estimates, coercivity ensures the loss decreases exponentially, and the two together guarantee that the system never leaves the region where the estimates hold.

The coercivity condition is the key requirement for local exponential convergence to hold. For EP, it reduces to a rank condition on a matrix built from the circuit's active-incidence structure and the constrained Laplacian (Theorem~\ref{thm:EP_coercivity}). For CL, the analogous condition involves a slightly different matrix, and the loss dynamics include an additional cubic correction term that must be controlled (Theorem~\ref{thm:CL_coercivity}). We show that coercivity can fail for both: we exhibit a ``kite'' circuit in which a symmetry forces the free and clamped voltage drops to have disjoint support, making the dissipative term degenerate. However, we show using Sard's theorem that coercivity holds at every point of the solution manifold for almost every choice of desired output (Proposition~\ref{prop:genericity}).

The paper is organized as follows. Section~\ref{sec:primer} is a self-contained primer that introduces physical learning through a single concrete resistor network, assuming no prior background; the reader familiar with the subject may skip it. Section~\ref{Mathematical Set up} introduces the general mathematical framework for physical learning: energy minimization, free and nudged states, and the parameter update rule in the small-nudge limit. Section~\ref{EP C Laws} computes the loss dynamics for EP, CL, and AL, establishing the gradient flow structure for EP and AL and the modified dynamics for CL. Section~\ref{sec:coercivity} develops the coercivity estimates for all three methods. Section~\ref{sec:kite} presents the kite circuit counterexample showing that coercivity can fail, establishes the genericity of coercivity via Sard's theorem, and includes numerical experiments. Section~\ref{sec:convergence} contains the main continuous-time convergence theorem and its proof. Section~\ref{sec:discrete} extends the analysis to the discrete update rule at finite learning rate, establishing geometric decay of the loss via a descent lemma combined with a Polyak--\L{}ojasiewicz inequality coming from coercivity.
\section{A Primer on Physical Learning}
\label{sec:primer}
Physical learning asks: can a physical network be trained, \emph{using local update rules and its own physics}, to produce desired outputs from given inputs? Who or what does the training is left deliberately open; a physical system may be trained by a human, by an environment, or even by itself \cite{stern2023learning}. We say a system has ``learned'' when its elements produce coherent behavior, responsive to the inputs, that is beneficial or desired. In this section we adopt the perspective of an engineer training a circuit. The algorithms we study have not been observed in nature in their exact form, but because they are local, they capture some aspect of how a physical system could learn from its environment. The language of electrical circuits is simply our preference; one could equally speak of spring networks, chemical networks, or fluid networks.

Consider the electrical network in Figure~\ref{fig:primer_circuit}.
It consists of nodes joined by edges, and each edge contains a resistor whose conductance controls the flow of electricity across it. We impose inputs as voltage constraints at a few nodes: in our example, node $a$ is held at voltage $1$ and node $b$ is grounded (held at voltage $0$).
\begin{figure}[ht]
  \centering
  \begin{circuitikz}
  \node[circ] (a) at (0,2)   {};
  \node[circ] (c) at (2.5,2) {};
  \node[circ] (z) at (5,2)   {};
  \node[circ] (d) at (0,0)   {};
  \node[circ] (e) at (2.5,0) {};
  \node[circ] (b) at (5,0)   {};

  \node[above left]  at (a) {$a$};
  \node[above]       at (z) {$z$};
  \node[below right] at (b) {$b$};

  \draw (a) to[R=$k_1$] (c);
  \draw (c) to[R=$k_2$] (z);
  \draw (d) to[R=$k_3$] (e);
  \draw (e) to[R=$k_4$] (b);

  \draw (a) to[R=$k_5$] (d);
  \draw (c) to[R=$k_6$] (e);
  \draw (z) to[R=$k_7$] (b);

  \draw (b) -- ++(0.4,0) node[ground, rotate=90] {};
\end{circuitikz}
  \caption{A resistor network with $N = 6$ nodes and $M = 7$ edges. The input nodes are $a$, held at voltage $1$, and $b$, held at voltage $0$ (ground); the output node is $z$. Each edge $e$ carries a resistor with conductance $k_e$.}
  \label{fig:primer_circuit}
\end{figure}
We wish the network to attain a desired voltage at one or more designated output nodes. In our example, we ask that the output node $z$ attain voltage $w = 1/2$.

To achieve this we adjust the conductances of the resistors, though not in an arbitrary way; we come to this shortly. Let $k \in \mathbb{R}^{M}$ denote the vector of the $M$ conductances. Because the circuit is linear, the output voltage depends linearly on the inputs once $k$ is fixed, and training amounts to adjusting $k$ until the output matches the target. One could find the node voltages directly from Kirchhoff's laws, but more revealingly they arise from a constrained energy minimization, the energy being the one the physical system itself minimizes.

For a linear circuit this energy is the dissipated power,
\[G(x ; k) = \sum_{e \in \{\text{network edges}\}}k_e(\Delta_e x)^2,\]
where $x \in \mathbb{R}^N$ collects the node voltages, $k_e$ is the conductance of edge $e$, and $\Delta_e x$ is the voltage drop across edge $e$ (with an arbitrary but fixed orientation). Holding the inputs fixed, the circuit settles into its \emph{free state} $x_0$: the node voltages that minimize the power subject to the input constraints (node $a$ at voltage $1$, node $b$ grounded). We write $x_0 = x_0(k)$ to emphasize that the free state depends on all the conductances; it is a global object, in that every node voltage depends on the entire network. We call the resulting voltage at node $z$ the \emph{free output}.

On a computer, one could find a suitable $k$ by gradient descent on the squared error between the free output and its target. This requires computing the free state explicitly, which in turn amounts to inverting a constrained graph Laplacian. Such an inversion is ``non-local'': no sub-block of the inverse can be formed without reference to the rest of the matrix. It is this global transfer of information that is posited to be a major source of inefficiency \cite{stern2021learning, dillavou2022demonstration}.

We therefore seek an algorithm in which the physics of the system itself performs the optimization. The central idea is \emph{nudging}: we perturb the system at the output node, either by injecting a current or by enforcing a voltage constraint. Focusing on the voltage version, called Coupled Learning, we form the \emph{nudged state} $x_\eta$ by re-solving the same minimization with node $z$ no longer free but clamped a fraction of the way from its free output toward the target, at a convex combination of the two governed by a small nudging parameter $\eta > 0$. At $\eta = 0$ the output is left free; as $\eta$ increases it is pulled toward the target. Nudging with a current instead of a voltage yields Equilibrium Propagation.

Producing the free and nudged states requires external controllers only at the input and output nodes. Updating the conductances requires, at each edge, a controller that observes the voltages at the two nodes framing that edge. For the linear circuit the update is
\[\Delta k_e = -\dfrac{\tau}{\eta}\left(\left(\Delta_e x_\eta\right)^2 - \left(\Delta_e x_0\right)^2 \right),\]
where $\tau > 0$ is a small learning rate.

To summarize, both $x_0$ and $x_\eta$ depend on the entire network, but the physics computes them for us. The engineer's only intervention is to measure the voltage drops $\Delta_e x_0$ and $\Delta_e x_\eta$ at each edge and to update that edge using only those two quantities. No system other than the physical one is needed to transfer information from one edge to another. In the next section we introduce the form of the update that generalizes to other kinds of physical systems.

\section{Mathematical Set-Up}
\label{Mathematical Set up}

We now proceed to the general framework and introduce additional structure as necessary.
Let
\[
    G(x;k): \mathbb{R}^{N} \times \mathbb{R}^{M} \to \mathbb{R}
\]
be a smooth function, convex in $x$ for each admissible $k$. Here $x \in \mathbb{R}^N$ are the \textit{state variables} (e.g., node voltages) and $k \in \mathbb{R}^M$ are the \textit{trainable parameters} (e.g., edge conductances).

Let $v \in \mathbb{R}^I$ denote an input. Generally $I\ll N.$ We encode the input through a matrix $P \in \mathbb{R}^{N \times I}$, and we assume that in the \emph{free state}, the physical system minimizes its energy subject to enforcing this input on state variables:
\begin{equation}
    x_0 
    = \arg\min_{x} G(x;k)
    \quad\text{subject to} \quad 
    P^\top x = v.
    \label{eq:free-state}
\end{equation}
We assume that this constrained minimization problem has a unique solution for each admissible $k$; equivalently, $G(\cdot\,;k)$ is strictly convex on the affine subspace $\{x : P^\top x = v\}$. During this stage, the parameters $k$ are held fixed.
The notation $x_0$ will become meaningful once we contrast it with the nudged/clamped states.

An example of a matrix $P$ comes from the example given in Section~\ref{sec:primer}. If nodes $a$ and $b$ are indexed as the first two nodes, then $P$ is given by 
\[P = \begin{bmatrix}
    1 & 0 \\
    0 & 1 \\ 
    0 & 0 \\
    0 & 0 \\
    0 & 0 \\
    0 & 0
\end{bmatrix}, \qquad v = \begin{bmatrix}
    1 \\0
\end{bmatrix}.\]

We encode the desired output through another matrix $Q \in \mathbb{R}^{N \times O}$ with $O \ll N$. The $Q$ coming from the example given in Section~\ref{sec:primer} is a $6 \times 1$ matrix, which is all zeros except for a one in its last entry if $z$ is the last node (i.e.\@, $Q$ can be identified as the natural basis vector $e_6$). In that example $w$ is $1 \times 1$ i.e. the scalar quantity $w=1/2.$

We assume that the input variables and the output variables are disjoint. Given a target output $w \in \mathbb{R}^O$, we define the \emph{state error} as
\[
    r := Q^\top x_0 - w.
\]
The goal of physical learning is to adjust $k$ in order to decrease $r$.

Equilibrium Propagation and Coupled Learning modify the free state by clamping (forcing or constraining respectively) the output sites. They use a small nudging parameter $\eta > 0$ to perturb these sites away from the free state and toward the desired objective. The two methods differ in \emph{how} the nudge is applied.

\paragraph{Equilibrium Propagation (EP)}
EP introduces a small \textit{force} proportional to the residual:
\begin{equation}
    x_\eta 
    = \arg\min_{x} 
    \big(
        G(x;k) - \eta\, \langle Qr, x\rangle
    \big)
    \quad\text{subject to}\quad
    P^\top x = v.
    \label{eq:EP}
\end{equation}

\paragraph{Coupled Learning (CL)}
Coupled Learning introduces the nudge as a constraint rather than as a force:
\begin{equation}
    x_\eta
    = \arg\min_{x} G(x;k)
    \quad\text{subject to}\quad
    P^\top x = v, 
    \qquad 
    Q^\top x =(1+\eta)Q^Tx_0 -\eta w .
    \label{eq:CL}
\end{equation}

In both cases, $x_\eta \to x_0$ as $\eta \to 0$.

\paragraph{Learning Update}

Let $k_i^{(n)}$ denote the $i$th parameter after the $n$th training iteration.
The physically implemented learning rule is of the form
\[
    k_i^{(n+1)} 
    = k_i^{(n)}
      + \dfrac{\tau}{\eta}\big(
      \partial_{k_i}G(x_\eta(k^{(n)});k^{(n)})
           - \partial_{k_i}G(x_0(k^{(n)});k^{(n)}) \big),
\]
where $\tau>0$ is the learning rate.
\begin{remark}
    The notation $\partial_{k_i}G$ and $\partial_{k_i}\nabla_xG$ should always be interpreted as the derivative of the functions with respect to their $k_i$ argument. On the other hand, if other arguments of $G$ depend functionally on $k_i$ and we want the derivative to also act on these, we could write $\partial_{k_i}(G)$. For example
    \[
    \partial_{k_i}(G(x_0(k);k)) = \nabla_{x}G(x_0(k);k) \cdot \partial_{k_i}x_0(k) + \partial_{k_i}G(x_0(k);k).
\]
\end{remark}

\paragraph{Adjoint Coupled Learning (AL)}
Both CL and EP use the state error to nudge. In EP the error is used to supply a small forcing. In CL, the error is used to nudge the state variables themselves by enforcing an additional constraint. 

Here we introduce \textit{Adjoint Coupled Learning} (AL) as an alternative. First, we introduce the \textit{reference state}
\begin{equation}
    x_0
    = \arg\min_{x} G(x;k)
    \quad\text{subject to}\quad
    P^\top x_0 = v, 
    \qquad 
    Q^\top x_0 = w .
    \label{eq:AL}
\end{equation}

\begin{remark}
    This reference state, $x_0$, is not the free state defined in EP and CL. However, it plays the same role as the free state in the learning update. Therefore, we denote it by $x_0$. The goal of AL is to make the constraint $Q^\top x_0=w$ ``redundant" (see below), in which case $x_0$ becomes the free state with the desired output.
\end{remark}

Next, we define the Lagrangian of this system:
\begin{equation}
    \mathcal{L}(x_0,\lambda_0,\mu_0 ; k):= G(x_0;k)+\langle \lambda_0, P^\top x_0-v\rangle+\langle \mu_0, Q^\top x_0-w\rangle
    \label{eq:AL_mu_0}
\end{equation}
with $\lambda_0$ and $\mu_0$ the Lagrange multipliers (e.g., currents). We call $\mu_0$ the \textit{adjoint error}, which we use to nudge. The nudged problem is 
\begin{equation}
    x_\eta
    = \arg\min_{x} G(x;k)
    \quad\text{subject to}\quad
    P^\top x_\eta = v, 
    \qquad 
    Q^\top x_\eta = w +\eta \mu_0.
    \label{eq:AL_nudge}
\end{equation}
In the sequel, we see that the norm of $\mu_0$ serves as a loss. In the small nudging limit, we recover gradient flow of this loss.  When $\mu_0 =0$, the reference state $x_0$ becomes exactly the free state of CL and EP and it satisfies the objective $Q^\top x_0= w$ without needing to enforce it as a constraint.

\paragraph{Learning Dynamics}
We consider the mathematically idealized continuous time training. Sending $\tau \to 0$ and rescaling time gives the continuous-time limit
\begin{equation}
    \dot{k}_i
    =
    \dfrac{1}{\eta}\,\big(\partial_{k_i}
        G(x_\eta;k) - \partial_{k_i}G(x_0;k)
    \big).
    \label{eq:continuous-dynamics}
\end{equation}
In this paper, we consider the limit of small nudging $\eta \to 0$
\begin{equation}
    \dot{k}_i
    =
   \partial_{k_i}
     \left\langle \nabla_x G(x_0;k),\, \dfrac{d}{d\eta}x_\eta\bigg|_{\eta=0} \right\rangle .
    \label{eq:leading-order-dynamics}
\end{equation}
Equation 
\eqref{eq:leading-order-dynamics} is the fundamental evolution equation for the remainder of the paper.

\section{Gradient Flow and Not}
\label{EP C Laws}
Next, we look more closely at the dynamics of EP, CL, and AL. It has already been shown before that EP performs gradient flow \cite{scellier2017equilibrium}. In \cite{stern2021learning}, the authors compare the calculation with the analogous calculation for CL and see that CL is not quite gradient flow. We redo both these calculations for completeness, expanding on the calculation for CL in a way that is helpful for our local convergence argument in the sequel. Finally, we show that AL also performs gradient flow but for a different loss.

Starting with EP and CL, we define the residual, as before, as 
\[ r(k) := Q^\top x_0(k) - w\] 
and the loss relevant to EP and  CL as 
\[
    \Phi(k) := \frac{1}{2}\|r(k)\|^{2}.
\]
To disambiguate, we introduce the loss for AL later.

\paragraph{Gradient of the Loss}
By convexity of $G(\cdot \,;k)$, \eqref{eq:free-state} is equivalent to the KKT system
\begin{align}
\nabla_x G(x_0; k) + P\lambda = 0,\qquad P^\top x_0 = v, \label{eq:kkt}
\end{align}
for some Lagrange multiplier $\lambda_0\in\mathbb{R}^I$.

Taking the total derivative of \eqref{eq:kkt} with respect to $k_i$ gives
\begin{align}
\begin{bmatrix}
H & P\\
P^\top & 0_{I \times I}
\end{bmatrix}
\begin{bmatrix}
\partial_{k_i}x_0\\[2pt]
\partial_{k_i}\lambda_0
\end{bmatrix}
= -
\begin{bmatrix}
\partial_{k_i}\nabla_x G\\
0_{I \times 1}
\end{bmatrix},\qquad
H := \nabla_x^2 G(x_0;k).\label{eq:lin-kkt}
\end{align}
Let
\begin{align}
S := \begin{bmatrix} \mathrm{Id}_{N\times N}\\ 0_{I\times N}\end{bmatrix},\qquad
R := S^\top
\begin{bmatrix}
H & P\\
P^\top & 0_{I \times I}
\end{bmatrix}^{-1}\!
S.\label{eq:R}
\end{align}
By the strict convexity of $G(\cdot\,;k)$ on the constraint surface, $H$ is positive definite on $\ker P^\top$, which ensures the bordered Hessian is invertible. Hence
\begin{align}
\partial_{k_i}x_0 = - R\,\partial_{k_i}\nabla_x G.\label{eq:phi-sens}
\end{align}
and hence
\begin{equation}\partial_{k_i} \Phi(k) = \langle Qr,  \partial_{k_i}x_0(k)\rangle =-\langle RQr,  \partial_{k_i}\nabla_x G\rangle . 
\label{eq:dynamics-grad}
\end{equation}

\paragraph{EP Executes Gradient Flow}

The KKT conditions for the EP minimization problem \eqref{eq:EP} read
\begin{align}
\nabla_x G(x_\eta) + P\lambda_\eta = \eta Qr,\qquad P^\top x_\eta = v.\label{eq:kkt-eta-EP}
\end{align}
Differentiating in $\eta$ at $\eta=0$ yields
\begin{align}
\begin{bmatrix}
H & P\\
P^\top & 0
\end{bmatrix}
\begin{bmatrix}
y\\
\alpha
\end{bmatrix}
=
\begin{bmatrix}
Qr\\
0
\end{bmatrix}, \quad \alpha :=\left.\dfrac{d}{d\eta}\lambda_\eta\right|_{\eta=0} ,\quad
y := \left. \frac{d}{d\eta}x_\eta\right |_{\eta=0}\label{eq:psi}
\end{align}
We can solve $y = RQr$ which can be regarded as the minimizer for an energy that is the quadratic approximation of $G(\cdot; k)$. Hence
\begin{align}
\dot{k}_i=\lim_{\eta\to 0}\frac{1}{\eta}\Big(\partial_{k_i}G(x_\eta;k)-\partial_{k_i}G(x_0;k)\Big)
= \big\langle y, \nabla_x \partial_{k_i}G\big\rangle
= \big\langle R Qr,\partial_{k_i}\nabla_x G\big\rangle\label{eq:dynamics-EP},
\end{align}
which is the negative of $\partial_{k_i}\Phi$. The error of the state variables, $r$, serves as forcing signal in a quadratic minimization problem. In light of \eqref{eq:dynamics-grad}, the dynamics of the loss function are given by
\begin{equation}
    \dot{\Phi} = - \sum_{i=1}^M\big\langle R Q r,\partial_{k_i}\nabla_x G\big\rangle^2=- \sum_{i=1}^M\dot{k}_i^2.
\end{equation}
Therefore the loss must decrease when the $k$ are not at a fixed point.

\paragraph{Not Quite Gradient Flow for CL}
Recall the nudged energy minimization problem \eqref{eq:CL} with minimizer $x_{\eta}$. The KKT conditions read
\begin{align}
\nabla_x G(x_\eta;k) + P\lambda_\eta +Q\mu_\eta = 0,\quad
P^\top x_\eta = v, \quad
Q^\top x_\eta = Q^\top x_0 + \eta r,
\label{eq:kkt-eta-CL}
\end{align}
with Lagrange multipliers $\lambda_\eta \in \mathbb{R}^I$ and $\mu_\eta \in \mathbb{R}^{O}.$
Differentiating in $\eta$ at $\eta=0$ yields
\begin{align}
\begin{bmatrix}
H & P & Q\\
P^\top & 0 & 0 \\
Q^\top & 0 & 0
\end{bmatrix}
\begin{bmatrix}
y\\[2pt]
\alpha\\
\beta
\end{bmatrix}
=
\begin{bmatrix}
0\\
0\\
r
\end{bmatrix},
\quad
\alpha :=\left.\dfrac{d}{d\eta}\lambda_\eta\right|_{\eta=0},
\quad
\beta :=\left.\dfrac{d}{d\eta}\mu_\eta\right|_{\eta=0},
\quad
y:= \left. \frac{d}{d\eta}x_\eta\right |_{\eta=0}.
\label{eq:psi2}
\end{align}
Solving, we find
\[
y =R Q\beta= R Q (Q^\top RQ)^{-1}r.
\]
Hence
\begin{align}
\dot{k}_i= \lim_{\eta\to 0}\frac{1}{\eta}\Big(\partial_{k_i}G(x_\eta;k)-\partial_{k_i}G(x_0;k)\Big)
= \big\langle y, \nabla_x \partial_{k_i}G\big\rangle
= \big\langle R Q \beta,\partial_{k_i}\nabla_x G\big\rangle.
\label{eq:dynamics-CL}
\end{align}

Define $D:=(Q^\top RQ)^{-1}$. Since $R$ is positive semi-definite with $\ker R = \operatorname{range}(P)$, the disjoint input/output assumption ensures that $Q^\top RQ$ is positive definite, and hence $D$ is well-defined and positive definite.

\begin{remark}
$D$ is a discrete Dirichlet-to-Neumann map: it converts a voltage boundary condition into the equivalent current input. CL nudges with the voltage error $r$, but solving the linearized KKT system, this is equivalent to applying a current $\beta = Dr$. Here $\beta$ is the linearized adjoint error---that is, $\beta$ equals the adjoint error $\mu_0$ exactly when $G$ is quadratic in $x$. In either interpretation, CL matches error and nudge of the same type (voltage to voltage, or equivalently current to current), while EP crosses the duality by applying the voltage error as a current.
\end{remark} 

Motivated by \eqref{eq:dynamics-CL}, consider the weighted loss
\begin{equation}
    \Phi_D(k):=\frac12\langle r, Dr\rangle =\frac12\langle r, \beta \rangle.
\end{equation}
Since $D=D(k)$ depends on time through $k(t)$, we have
\begin{equation}
\dot D
= \sum_{i=1}^M (\partial_{k_i}D)\,\dot k_i
= \sum_{i=1}^M (\partial_{k_i}D)\,\big\langle R Q Dr,~\partial_{k_i}\nabla_x G\big\rangle,
\label{eq:Ddot}
\end{equation}
where in the second equality we used the CL learning rule
$\dot k_i=\langle RQDr,\partial_{k_i}\nabla_xG\rangle$.

Differentiating $\Phi_D=\frac12 \langle r, D r\rangle$ gives
\begin{equation}
\dot{\Phi}_D
= - \sum_{i=1}^M\big\langle R Q Dr,\partial_{k_i}\nabla_x G\big\rangle^2
+\frac12\big\langle r,\dot D\,r\big\rangle.
\label{eq:PhiD_dot}
\end{equation}
The first term is the gradient-flow dissipation term. The second term arises solely from the $k$-dependence of the weight $D$; using \eqref{eq:Ddot}, it is cubic in $r$ in the sense that it contains two factors of $r$ from $\langle r,(\partial_{k_i}D)r\rangle$ and one factor of $r$ from $\langle RQDr,\partial_{k_i}\nabla_xG\rangle$. Thus, for sufficiently small $r$, the leading-order behavior is dominated by the dissipative term. In the sequel, we make this precise.
\paragraph{Gradient of the Adjoint Loss}
In what follows $x_0, \lambda_0,$ and $\mu_0$ reflect the state and adjoint variables of the reference state and not the free state. We define the \textit{adjoint loss}  as 
\[
\Phi^*(k) := \tfrac12\|\mu_0(k)\|^2
\]
where $\mu_0$ is the Lagrange multiplier found in \eqref{eq:AL_mu_0}. The reference state $x_0$ for AL solves 
\begin{equation}
\label{eq:AL_KKT_ref}
\nabla_x G(x_0;k) + P\lambda_0 + Q\mu_0 = 0,
\qquad
P^\top x_0 = v,
\qquad
Q^\top x_0 = w .
\end{equation}
We have that
\[
\partial_{k_i} \Phi^*(k)
=
\langle \mu_0, \partial_{k_i}\mu_0\rangle.
\]
We take the total derivative of the reference KKT conditions \eqref{eq:AL_KKT_ref} with respect to the parameter $k_i$.
This yields
\begin{equation}
\label{eq:AL_kdiff}
\begin{bmatrix}
H_{\rm AL} & P & Q\\
P^\top & 0 & 0\\
Q^\top & 0 & 0
\end{bmatrix}
\begin{bmatrix}
\partial_{k_i} x_0\\
\partial_{k_i} \lambda_0\\
\partial_{k_i} \mu_0
\end{bmatrix}
=
-
\begin{bmatrix}
\partial_{k_i}\nabla_x G(x_0;k)\\
0\\
0
\end{bmatrix},
\end{equation}
where $H_{\rm AL}:=\nabla^2_{x}G(x_0;k)$.
\begin{remark}
The difference between $H_{\rm AL} $ and $H$ is that the  first is evaluated at the reference state of AL whereas the second is evaluated at the free state of CL or EP. These are not the same in general until the objective is satisfied.
\end{remark}

Consider separately the KKT system given by 
\begin{equation}
\label{eq:AL_eta}
\begin{bmatrix}
H_{\rm AL} & P & Q\\
P^\top & 0 & 0\\
Q^\top & 0 & 0
\end{bmatrix}
\begin{bmatrix}
 y\\
\alpha \\
\beta
\end{bmatrix}
=
\begin{bmatrix}
 0\\
0\\
\mu_0
\end{bmatrix}.
\end{equation}
Taking the inner product of $y$ with the first equation of the KKT system \eqref{eq:AL_kdiff}, we have that 
\begin{equation*}
    \langle y,H_{\rm AL}\partial_{k_i} x_0 \rangle+ \langle y,P\partial_{k_i} \lambda_0 \rangle+ \langle y,Q\partial_{k_i} \mu_0 \rangle  = -\langle y,\partial_{k_i} \nabla_xG(x_0;k)\rangle
  \end{equation*}
Using that $P^\top y=0, Q^\top y = \mu_0$, and $H_{\rm AL}y = -P\alpha-Q\beta$, we find that:
  \begin{equation*}
       \langle -P\alpha -Q\beta,  \partial_{k_i} x_0 \rangle+ \langle \mu_0,\partial_{k_i} \mu_0 \rangle  = -\langle y,\partial_{k_i} \nabla_xG(x_0;k)\rangle.
\end{equation*}
Finally we use that $P^\top \partial_{k_i}x_0 = Q^\top \partial_{k_i}x_0 =0$ to get
\begin{equation}
      \partial_{k_i}\Phi^*=   \langle \mu_0,\partial_{k_i} \mu_0 \rangle  = -\langle y,\partial_{k_i} \nabla_xG(x_0;k)\rangle.
      \label{eq:grad_adjoint_loss}
\end{equation}

\paragraph{AL Executes Gradient Flow}
The nudged state $x_\eta$ satisfies
\begin{equation}
\label{eq:AL_KKT_eta}
\nabla_x G(x_\eta;k) + P\lambda_\eta + Q\mu_\eta = 0,
\qquad
P^\top x_\eta = v,
\qquad
Q^\top x_\eta = w + \eta\mu_0 .
\end{equation}
Differentiating \eqref{eq:AL_KKT_eta} with respect to $\eta$ at $\eta=0$ yields the linearized
KKT system found in \eqref{eq:AL_eta} with
\begin{equation}
\label{eq:AL_linKKT}
y := \left.\frac{d}{d\eta}x_\eta\right|_{\eta=0},\alpha := \left.\frac{d}{d\eta}\lambda_\eta\right|_{\eta=0},\beta := \left.\frac{d}{d\eta}\mu_\eta\right|_{\eta=0}.
\end{equation}
Solving \eqref{eq:AL_linKKT} gives
\begin{equation}
\label{eq:AL_y}
y = R_{AL}Q D_{\rm AL}\,\mu_0.
\end{equation}
where $R_{\rm AL}$ is defined analogously to $R$ in \eqref{eq:R} but using $H_{\rm AL}$ instead of $H$. We also have $D_{\rm AL} = (Q^\top R_{\rm AL} Q)^{-1}$. We obtain
\begin{equation}
\begin{aligned}
\label{eq:AL_limit}
\dot{k}_i=\lim_{\eta\to0}
\frac{1}{\eta}
\big(
\partial_{k_i}G(x_\eta;k)
-
\partial_{k_i}G(x_0;k)
\big)
&=\big\langle y,\;\partial_{k_i}\nabla_x G(x_0;k)\big\rangle
\\&=
\big\langle R_{\rm AL}Q D_{\rm AL}\mu_0,\;\partial_{k_i}\nabla_x G(x_0;k)\big\rangle ,
\end{aligned}
\end{equation}
 Comparing with \eqref{eq:grad_adjoint_loss}, we see this is exactly the negative gradient of the loss, so 
 \begin{equation}
    \dot{\Phi}^* = - \sum_{i=1}^M\big\langle R_{\rm AL} Q D_{\rm AL}\mu_0,\partial_{k_i}\nabla_x G\big\rangle^2=- \sum_{i=1}^M\dot{k}_i^2.
\end{equation}

\paragraph{Reinterpreting AL as EP}
Finally, we note that AL can be formally associated with the augmented energy $\tilde{G}(\tilde{x},k):=G(x,k) +\mu (Q^\top x-w)$, where $\tilde{x} := (x,\mu)$. Although $\tilde{G}$ is not convex in $\tilde{x}$, the constrained critical point of $\tilde{G}$ subject to $\tilde{P}^\top \tilde{x} = v$ satisfies $\nabla_{\tilde{x}}\tilde{G} + \tilde{P}\lambda = 0$, which recovers exactly the AL reference state equations \eqref{eq:AL_KKT_ref}. More generally, the KKT systems and learning update for AL have the same algebraic structure as EP applied to $\tilde{G}$: the bordered Hessian takes the same form, and $\partial_{k_i}\tilde{G} = \partial_{k_i}G$ since the augmented term does not depend on $k$. Setting $\tilde{P}^\top := [P^\top \ 0_{O \times O}]$, $\tilde{Q}^\top := [0_{O \times N} \ I_{O \times O}]$, and $\tilde{w}:=0$, all estimates which apply to EP in the sequel apply equally to AL with the appropriate substitutions.

\paragraph{Summary and comparison}
Table~\ref{tab:comparison} summarizes the three methods. A pattern emerges: gradient flow is recovered precisely when the error and the nudge are of dual type (voltage/current or current/voltage). When both are of the same type, as in CL, the dynamics deviate from gradient flow. This suggests a general principle: to obtain gradient descent from a contrastive learning rule, the nudge should cross the voltage--current duality relative to the error. For instance, a method with current inputs, current outputs, and current error would need to nudge with a voltage perturbation to recover gradient flow.
\begin{table}[h]
\centering
\begin{tabular}{|l|c|c|c|c|}
\hline
 & EP & CL & AL & ? \\
\hline
Input & Voltage & Voltage & Voltage & Current \\
Output & Voltage & Voltage & Voltage & Current \\
\rowcolor{gray!20} Error & Voltage & Voltage & Current & Current \\
\rowcolor{gray!20} Nudge & Current & Voltage & Voltage & Voltage \\
\rowcolor{gray!20} Gradient flow? & Yes & No & Yes & Yes \\
\hline
\end{tabular}
\caption{Comparison of EP, CL, and AL, along with a predicted fourth method (?). Gradient flow is recovered when the error and nudge cross the voltage--current duality.}
\label{tab:comparison}
\end{table}

\section{Coercivity Estimates}
\label{sec:coercivity}
As the number of trainable parameters, $M$, generally outnumbers the dimension of the desired outputs, $O$, there is, generically, a solution manifold of dimension $M-O$. We define the solution manifold as 
\begin{equation}
    \mathcal{S} := \{ k \in \mathbb{R}^M \ |  \ Q^\top x_0 =w\}
\end{equation}
where $x_0$ is the free state. Equivalently 
\begin{equation}
    \mathcal{S} := \{ k \in \mathbb{R}^M \ |  \ \mu_0 = 0\}
\end{equation}
where $\mu_0$ is the adjoint error of the reference state. \textit{We assume it is non-empty}. 

The aim of this section is to check when the dynamics defined by EP, CL, and AL are coercive: they give rise to exponentially fast decay of the loss near regular points of the solution manifold. Answering this question requires us to specify the nature of the energy function $G$. Therefore we answer these questions in the context of linear circuits with trainable conductances. Since our results are local, we expect that linearity may be dropped when $G$ is sufficiently smooth. The circuit picture also allows us to frame our results in terms of simple features of the physical state of the circuit and give physical interpretations to our results. 

\paragraph{Circuit Set-up}
To accommodate the network structure of a circuit, we introduce graph notation. Let $\mathcal{V}=\{1,...,N\}$ be the set of vertices or nodes. The state vector, $x=(x^1,x^2,\ldots,x^N) \in \mathbb{R}^N$, gives voltages at nodes. The edges are given by $\mathcal{E}=\{\{i,j\} \ | \ i \sim j\},$ and we still assume $|\mathcal{E}| =M$. The conductances $\{k_{i,j}\}_{\{i,j\} \in \mathcal{E}}$ are assumed to be the training parameters. There is one for each edge and we assume symmetry: $k_{i,j}=k_{j,i}$.

Let $\{e_i\}_{i \in \mathcal{V}}$ be the natural basis. We define the incidence vectors as $\{d_{i,j}\}_{\{i,j\} \in \mathcal{E}}$ where 
\[d_{i,j} := \begin{cases} e_i-e_j & i<j
\\ 0 & \text{else}.
\end{cases}\]
We must arbitrarily pick the incidence direction, and so we have opted for $i<j$. Then for $e \in \mathcal{E}$ s.t. $e =\{i,j\}$, we let $d_e:=d_{i,j}$. Analogously $k_e =k_{i,j}.$ We may lexicographically order the set of edges and use this ordering to define the $M$-tuple of conductances $k$. Then $e$ can be construed as the edge or the index of this edge where $e \in \{1,2,\ldots,M\}.$

We define the \textit{circuit Laplacian} as
\begin{equation}
   L(k)=\sum_{e \in \mathcal{E}}k_ed_{e}d_{e}^\top.
\end{equation}
Assume $k>0$, i.e.\ all components of $k$ are positive, and that the circuit is connected. Then $L$ is symmetric and positive semi-definite with $\ker L = \operatorname{span}\{\mathbf{1}\}$.

The energy function is
\begin{equation}
    G(x,k) := \tfrac{1}{2}\langle x, L(k)x\rangle,
\end{equation}
which is convex in $x$. We assume that the input matrix $P$ is chosen so that the constrained minimization of $G$ subject to $P^\top x = v$ has a unique solution; equivalently, $\ker L \cap \ker P^\top = \{0\}$. To handle the input constraints, we define the \textit{input-constrained Laplacian} as
\begin{equation}
    \hat{L}(k) = \begin{bmatrix}
        L & P \\ P^\top & 0
    \end{bmatrix}.
\end{equation}
Since $L$ is positive definite on $\ker P^\top$, $\hat{L}$ is invertible. Setting $\hat{x} :=(x,\lambda)$ and $\hat{v}:=(0_{N},v)$, where $\lambda$ are the Lagrange multipliers for the input constraints, Kirchhoff's law gives $\hat{L}\hat{x}_0 = \hat{v}$. The multiplier $P\lambda$ may be interpreted as the current injection needed to sustain the input nodes at the constrained voltages $v$.

The error in this context is $r= Q^\top x_0-w$ with $w$ the desired output. Below, it is helpful to have the notation $\hat{d}^\top :=[d^\top\ 0_{I}]$ and $\hat{Q}^\top := [Q^\top \ 0_{O\times I}]$.

\paragraph{Coercivity of EP and Non-Degeneracy of the Loss}
A regular point of the solution manifold is one where $\nabla_k r$ has full row rank. At such points, the Implicit Function Theorem guarantees that $\mathcal{S}$ is a smooth manifold of dimension $M-O$. As EP does gradient flow of $\Phi$, showing coercivity also gives non-degeneracy. 

First we compute 
\begin{equation}
    \partial_{k_e}r= -\hat{Q}^\top \hat{L}^{-1}\left(\partial_{k_e}\hat{L} \right)\hat{L}^{-1}\hat{v} \in \mathbb{R}^O
\end{equation}
Then $\nabla_k r \in \mathbb{R}^{O \times M}$ and is full row rank exactly when $(\nabla_k r)(\nabla_k r)^\top$ is positive definite. Taking a test vector $\tilde{r} \in \mathbb{R}^{O} $, we see that 
\begin{equation}
\begin{aligned}
  \left\|(\nabla_k r)^\top \tilde{r}\right\|_2^2&=  \sum_{e \in \mathcal{E}}\left(\tilde{r}^\top \hat{Q}^\top \hat{L}^{-1}\left(\partial_{k_e}\hat{L} \right)\hat{L}^{-1}\hat{v} \right)^2
  \\
  &= \sum_{e \in \mathcal{E}}\left(\tilde{r}^\top \hat{Q}^\top \hat{L}^{-1}\left(\hat{d}_e\hat{d}_e^\top \right)\hat{L}^{-1}\hat{v} \right)^2
  \\&= \sum_{e \in \mathcal{E}}\left(\hat{d}^\top _e  \hat{L}^{-1}\hat{Q}\tilde{r}\right)^2\left(\hat{d}_e^\top \hat{L}^{-1}\hat{v} \right)^2
  \\&= \sum_{e \in \mathcal{E}}\left(\Delta_ey(\tilde{r})\right)^2\left(\Delta_ex_0 \right)^2
   \end{aligned}
\end{equation}
Here $\Delta_ex_0$ is the voltage drop across edge $e$ in the free state. $\Delta_ey(\tilde{r})$ is the voltage drop across edge $e$, when $\tilde{r}$ is inputted at output nodes via $Q$ as a current and input nodes are grounded. In what follows, this is what we call the \textit{clamped state}. The expression on the right-hand side is exactly the update for EP found in \eqref{eq:dynamics-EP} upon replacing $\tilde{r}$ by $r$. The loss is strictly decreasing whenever this quantity is strictly positive. In the sequel, we see that this quantity can be $0$ even when a seemingly-easy objective has not been reached. 

Now we work toward a necessary and sufficient condition to establish coercivity. Then we give some simple, but only sufficient physical interpretation. First, we define a set of \textit{active edges} and associated diagonal matrix:
\begin{equation}
    \mathcal{E}' = \{e \in \mathcal{E} \ | \  \Delta_ex_0 \neq 0 \}, \quad \Lambda:=\text{diag}(\{(\Delta_ex_0)^2\}_{e \in \mathcal{E}'})
\end{equation}
We order the diagonal matrix with respect to the ordering on $\mathcal{E}'$ inherited from the lexicographical ordering on $\mathcal{E}.$ Then we define the\textit{ active-incidence matrix} as 
\[\mathscr{D}' := [\hat{d}_e]_{e \in \mathcal{E}'}.\] 

Next, we observe that 
\begin{equation}
\begin{aligned}
    \left\|(\nabla_k r)^\top \tilde{r}\right\|_2^2 &= \|\Lambda^{1/2} \mathscr{D}'^\top \hat{L}^{-1}\hat{Q}\tilde{r}\|_2^2
    \\ 
    &\ge \lambda_{\min}\left(\Lambda\right)\sigma_{\min}\left(\mathscr{D}'^\top \hat{L}^{-1}\hat{Q}\right)^2\|\tilde{r}\|_2^2,
    \end{aligned}
\end{equation}
where $\lambda_{\min} (\cdot)$ denotes the smallest eigenvalue of a square matrix and $\sigma_{\min} (\cdot)$ its smallest singular value. 
Thus coercivity follows if $\mathscr{D}'^\top\hat{L}^{-1}\hat{Q}$ has full column rank. On the other hand, if this matrix does not have full column rank, there is a test vector, $\tilde{r}$, that makes the right hand side $0$. This gives the following lemma:
\begin{lemma}
\label{lem:coercivity_iff}
    Suppose $k_0>0$. Then there exists a positive constant $c$ such that
    \[ \left\|(\nabla_k r)^\top \tilde{r}\right\|_2^2\ge c\,\|\tilde{r}\|_2^2\]
    holds for all $\tilde{r} \in \mathbb{R}^O$ and all $k$ in a neighborhood of $k_0$ if and only if the active-incidence matrix $\mathscr{D}'$ at $k_0$ is such that $\mathscr{D}'^\top\hat{L}^{-1}\hat{Q}$ has full column rank. In particular, the coercivity condition is equivalent to $\nabla_k r(k_0)$ having full row rank.
\end{lemma}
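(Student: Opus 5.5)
The plan is to use the identity already derived before the statement,
\[
\left\|(\nabla_k r)^\top \tilde r\right\|_2^2 = \sum_{e\in\mathcal{E}} (\Delta_e x_0)^2 (\Delta_e y(\tilde r))^2 = \|\Lambda^{1/2}\mathscr{D}'^\top \hat L^{-1}\hat Q\tilde r\|_2^2 ,
\]
where only active edges survive because inactive edges have $\Delta_e x_0=0$. The key object is the $O\times O$ Gram matrix $A(k):=(\nabla_k r)(\nabla_k r)^\top$. The claimed inequality with constant $c$ says precisely that $\lambda_{\min}(A(k))\ge c$ near $k_0$. I will prove the two directions and then the final ``in particular'' statement.

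For sufficiency, suppose $\mathscr{D}'^\top\hat L^{-1}\hat Q$ has full column rank at $k_0$. Since $\Lambda(k_0)$ is positive definite on the active edges, the identity gives $\tilde r^\top A(k_0)\tilde r\ge \lambda_{\min}(\Lambda)\sigma_{\min}^2\|\tilde r\|^2$ with a strictly positive bound, so $\lambda_{\min}(A(k_0))>0$.

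To pass to a neighborhood, I will use continuity rather than the formula for $\Lambda$. This matters because the active set $\mathcal{E}'$ can change with $k$: edges inactive at $k_0$ may become active nearby. Such changes only add nonnegative terms, but the cleaner route is the following. For $k>0$, $\hat L(k)$ is invertible and depends polynomially on $k$, so $\hat L^{-1}$ is smooth. Hence $\nabla_k r$, and therefore $A(k)$, is continuous. Because $\lambda_{\min}$ is continuous on symmetric matrices, $\lambda_{\min}(A(k))\ge \tfrac12\lambda_{\min}(A(k_0))=:c$ on a neighborhood of $k_0$, where positivity $k>0$ is also preserved. This continuity step is the only genuinely delicate point: the edgewise lower bound is not itself uniform in $k$ once edges switch between active and inactive, so the argument must go through $A(k)$.

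For necessity, suppose the matrix does not have full column rank at $k_0$. Then there is $\tilde r\ne 0$ with $\mathscr{D}'^\top\hat L^{-1}\hat Q\tilde r=0$. By the identity, $\|(\nabla_k r(k_0))^\top\tilde r\|^2=0$, so no $c>0$ can satisfy the inequality at $k=k_0$, which lies in every neighborhood of $k_0$.

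For the ``in particular'' statement, note that $\|(\nabla_k r)^\top\tilde r\|^2=0$ for some $\tilde r\ne0$ exactly when $(\nabla_k r)^\top$ has a nontrivial kernel. Equivalently, $\nabla_k r$ fails to have full row rank. The identity shows that this kernel at $k_0$ coincides with $\ker(\Lambda^{1/2}\mathscr{D}'^\top\hat L^{-1}\hat Q)=\ker(\mathscr{D}'^\top\hat L^{-1}\hat Q)$, using that $\Lambda^{1/2}$ is invertible. So full row rank of $\nabla_k r(k_0)$ is equivalent to the rank condition, and by the two directions above also to coercivity.
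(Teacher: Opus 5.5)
Your proof is correct and follows the paper's argument: the same identity $\|(\nabla_k r)^\top\tilde r\|_2^2=\|\Lambda^{1/2}\mathscr{D}'^\top\hat L^{-1}\hat Q\tilde r\|_2^2$, the $\lambda_{\min}(\Lambda)\,\sigma_{\min}(\mathscr{D}'^\top\hat L^{-1}\hat Q)^2$ lower bound for sufficiency, and a kernel vector for necessity. Your continuity argument via $\lambda_{\min}$ of the Gram matrix $(\nabla_k r)(\nabla_k r)^\top$ supplies the passage from $k_0$ to a neighborhood, which the paper leaves implicit. It also correctly avoids a real pitfall: edges that become newly active near $k_0$ can drive the pointwise constant $\lambda_{\min}(\Lambda(k))$ toward zero, so that constant cannot be used directly on the neighborhood.
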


\begin{theorem}[Coercivity of EP]
\label{thm:EP_coercivity}
    Suppose $k_0>0$ and that $\mathscr{D}'^\top\hat{L}^{-1}\hat{Q}$ has full column rank at $k_0$. Set $\lambda := \lambda_{\min}(\Lambda(k_0))$ and $c := \sigma_{\min}\bigl(\mathscr{D}'^\top\hat{L}^{-1}\hat{Q}\bigr)\big|_{k_0}$. Then there is a neighborhood $U$ of $k_0$ such that for all $k \in U$ the loss $\Phi = \frac{1}{2}\|r\|^2$ satisfies
\begin{equation}
    \dot{\Phi} \leq -2\lambda c^2\,\Phi.
\end{equation}
In particular, $\Phi$ decays exponentially as long as $k$ remains in $U$.
\end{theorem}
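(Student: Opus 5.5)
The plan is to write $\dot\Phi$ as a quadratic form in $r$ and bound it below using the factorization that precedes Lemma~\ref{lem:coercivity_iff}. The inequality is first proved pointwise in $k$. Then continuity in $k$ gives a neighborhood $U$ of $k_0$ on which the structural hypotheses persist. The last step is the passage from the pointwise constants to the constants $\lambda$, $c$ fixed at $k_0$ in the statement.

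\emph{Step 1 (loss dynamics).} EP is gradient flow of $\Phi$: by \eqref{eq:dynamics-grad} and \eqref{eq:dynamics-EP}, $\dot\Phi = -\sum_e \dot k_e^2 = -\|(\nabla_k r)^\top r\|_2^2$. Substituting the circuit computation with $\tilde r = r$ gives
\[
\dot\Phi = -\sum_{e\in\mathcal E}(\Delta_e y(r))^2(\Delta_e x_0)^2 = -\bigl\|\Lambda^{1/2}\mathscr D'^\top \hat L^{-1}\hat Q\, r\bigr\|_2^2 .
\]
Here $\Lambda$, $\mathscr D'$ and $\hat L$ are evaluated at the current $k$. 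Inactive edges contribute nothing because $\Delta_e x_0 = 0$ on them, so restricting the sum to $\mathcal E'$ loses nothing.

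\emph{Step 2 (pointwise bound).} Apply $\|\Lambda^{1/2}A r\|^2 \ge \lambda_{\min}(\Lambda)\,\|Ar\|^2$ and then $\|Ar\|^2 \ge \sigma_{\min}(A)^2\|r\|^2$, with $A = \mathscr D'^\top\hat L^{-1}\hat Q$. Using $\|r\|^2 = 2\Phi$, this gives
\[
\dot\Phi \le -2\,\lambda_{\min}(\Lambda(k))\,\sigma_{\min}\bigl(A(k)\bigr)^2\,\Phi .
\]
At $k = k_0$ this is exactly the asserted inequality with $\lambda$ and $c$. Both factors are positive there: $\lambda>0$ because $\Lambda$ only collects nonzero drops, and $c>0$ by the full column rank hypothesis (Lemma~\ref{lem:coercivity_iff}).

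\emph{Step 3 (neighborhood, the main obstacle).} This is where the work lies. The map $k \mapsto \hat L(k)^{-1}$ is smooth on $k>0$, since $\hat L$ is invertible there. Hence $x_0(k)$ and every drop $\Delta_e x_0(k)$ are continuous. I will choose $U$ small enough that each active edge at $k_0$ stays active in $U$.

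The active set can still grow near $k_0$. To avoid this, I will work throughout with the fixed index set $\mathcal E'(k_0)$. Discarding the extra nonnegative terms for edges in $\mathcal E'(k)\setminus\mathcal E'(k_0)$ only weakens $-\dot\Phi$ in the safe direction. With this fixed index set, $\Lambda$ and $A$ depend continuously on $k$, and so do their smallest eigenvalue and smallest singular value.

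The continuous product $\lambda_{\min}(\Lambda(k))\,\sigma_{\min}(A(k))^2$ equals $\lambda c^2$ at $k_0$. The stated inequality therefore holds on $U$ with the constant read as this product at the current point of $U$, which agrees with $\lambda c^2$ at $k_0$. Continuity guarantees that the product stays positive on $U$, and hence that $\Phi$ decays exponentially while $k\in U$.

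The delicate point is whether the constant can literally be frozen at the value $\lambda c^2$ throughout $U$. This works directly if $k_0$ locally minimizes the product. Otherwise I expect it to require interpreting $\lambda$ and $c$ as the infimum of these continuous quantities over $U$. I would check this against how the constants are used in Theorem~\ref{thm:local_convergence}.
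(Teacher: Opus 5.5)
Your Steps 1 and 2 are the paper's argument. The paper obtains this theorem directly from the display preceding Lemma~\ref{lem:coercivity_iff}, namely $\|(\nabla_k r)^\top\tilde r\|_2^2=\|\Lambda^{1/2}\mathscr{D}'^\top\hat{L}^{-1}\hat{Q}\tilde r\|_2^2\ge\lambda_{\min}(\Lambda)\,\sigma_{\min}(\mathscr{D}'^\top\hat{L}^{-1}\hat{Q})^2\|\tilde r\|_2^2$, taken with $\tilde r=r$ and $\dot\Phi=-\|(\nabla_k r)^\top r\|_2^2$. It leaves the passage to a neighborhood implicit.

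Your Step 3 is more careful than the paper, and freezing the index set at $\mathcal{E}'(k_0)$ is genuinely needed. If $\Lambda(k)$ is built from $\mathcal{E}'(k)$, an edge that is inactive at $k_0$ but carries a tiny drop nearby makes $\lambda_{\min}(\Lambda(k))$ arbitrarily small. Edges $\{2,3\}$ and $\{3,4\}$ do exactly this near a coercive point of the ratio set in the kite circuit. The paper's pointwise product is therefore discontinuous at $k_0$, and discarding those nonnegative terms, as you do, is the right repair.

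Your closing doubt is justified, and you should settle it rather than leave it open. The constant cannot be frozen at $\lambda c^2$ in general, and the escape route you mention never applies. Under $k\mapsto tk$, the free state $x_0$, the active set and $\Lambda$ are unchanged, while $\mathscr{D}'^\top\hat{L}^{-1}\hat{Q}$ is multiplied by $1/t$. Your product, and also the sharp constant $\sigma_{\min}(\Lambda^{1/2}\mathscr{D}'^\top\hat{L}^{-1}\hat{Q})^2$, therefore strictly decrease along the ray $t>1$, so $k_0$ is never a local minimizer.

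A concrete counterexample: two resistors in series with $x_1=1$, $x_3=0$, output $x_2$, and $k_0=(1,1)$.
\begin{itemize}
\item Here $O=1$ and $\dot\Phi=-2\kappa(k)\Phi$ exactly, with $\kappa(k)=(k_{1,2}^2+k_{2,3}^2)/(k_{1,2}+k_{2,3})^4$.
\item At $k_0$, $\Lambda=\mathrm{diag}(\tfrac14,\tfrac14)$ and $\mathscr{D}'^\top\hat{L}^{-1}\hat{Q}=(-\tfrac12,\tfrac12)^\top$, so $\lambda c^2=\tfrac18=\kappa(k_0)$.
\item But $\kappa(1+\epsilon,1+\epsilon)=\tfrac{1}{8(1+\epsilon)^2}<\tfrac18$, with $r\neq 0$ there when $w\neq\tfrac12$. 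If $w=\tfrac12$, use $k=(1+2\epsilon,1)$ instead, where $r\neq 0$ and still $\kappa<\tfrac18$.
\end{itemize}
So the literal inequality fails in every neighborhood of $k_0$.

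What your argument does prove is the correct version: for every $\theta\in(0,1)$ there is a neighborhood $U$ with $\dot\Phi\le-2\theta\lambda c^2\Phi$. Taking $\theta=\tfrac12$ mirrors Theorem~\ref{thm:CL_coercivity}, and this is cleaner than redefining $\lambda,c$ as infima over $U$. The frozen constant survives only when $\sigma_{\min}(\Lambda^{1/2}\mathscr{D}'^\top\hat{L}^{-1}\hat{Q})^2>\lambda c^2$ at $k_0$, by continuity over the fixed index set. Theorems~\ref{thm:local_convergence} and~\ref{thm:discrete_EP} only use some uniform positive rate on $U$, so nothing downstream is affected.
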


\begin{remark}
A simple sufficient condition for coercivity is that every edge carries a nonzero voltage drop in the free state, i.e.\ $\mathcal{E}' = \mathcal{E}$. In this case $\mathscr{D}' = \mathscr{D}$ is the full incidence matrix of the graph, which has rank $N$ for a connected graph, and the coercivity condition is automatically satisfied.
\end{remark}

\paragraph{Coercivity of CL and a Local Lyapunov Function}
Considering just the first term appearing on the right-hand side of \eqref{eq:PhiD_dot}, in the circuit setting this is
\begin{equation}
\begin{aligned}
\sum_{i=1}^M\big\langle R Q D\tilde{r},\partial_{k_i}\nabla_x G\big\rangle^2
& =
   \sum_{e\in \mathcal{E}}\langle  \tilde{r}, DQ^\top (S^\top \hat{L}^{-1}S)d_ed_e^\top x\rangle^2\\
   & = \sum_{e\in \mathcal{E}}\langle  \tilde{r}, D(Q^\top S^\top)  \hat{L}^{-1}(Sd_e)(d_e^\top S^\top) \hat{L}^{-1}\hat{v}\rangle^2\\
   & = \sum_{e \in \mathcal{E}}\left(\tilde{r}^\top D \hat{Q}^\top \hat{L}^{-1}\left(\partial_{k_e}\hat{L} \right)\hat{L}^{-1}\hat{v} \right)^2
\\
& =\sum_{e \in \mathcal{E}'}\left(\hat{d}_e^\top\hat{L}^{-1}\hat{v} \right)^2\left(\tilde{r}^\top D \hat{Q}^\top \hat{L}^{-1}\hat{d}_e^\top \right)^2
\\
&\ge \lambda_{\min}\left(\Lambda\right)\sigma_{\min}\left(\mathscr{D}'^\top \hat{L}^{-1}\hat{Q}D\right)^2\|\tilde{r}\|_2^2.
\end{aligned}
\end{equation}

Up to multiplication of $\tilde{r}$ by $D = (\hat{Q}^\top\hat{L}^{-1}\hat{Q})^{-1}$, the right-hand side of the first line is exactly in the form of the expression for EP. Thus CL and EP are coercive under the same condition on the active-incidence matrix.

\begin{lemma}
\label{lem:CL_coercivity_iff}
Suppose $k_0>0$. Then there exists a positive constant $c_D$ such that
\[
\sum_{i=1}^M\big\langle R Q D\tilde{r},\partial_{k_i}\nabla_x G\big\rangle^2 \geq c_D\,\|\tilde{r}\|_2^2
\]
holds for all $\tilde{r} \in \mathbb{R}^O$ and all $k$ in a neighborhood of $k_0$ if and only if $\mathscr{D}'^\top\hat{L}^{-1}\hat{Q}$ has full column rank at $k_0$.
\end{lemma}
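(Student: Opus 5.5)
The plan is to reduce Lemma~\ref{lem:CL_coercivity_iff} to Lemma~\ref{lem:coercivity_iff} through the change of variables $\tilde r \mapsto D\tilde r$. The computation preceding the statement gives
\[
\sum_{i=1}^M\big\langle RQD\tilde r,\partial_{k_i}\nabla_xG\big\rangle^2
=\sum_{e\in\mathcal E}\big(\hat d_e^\top\hat L^{-1}\hat v\big)^2\big(\hat d_e^\top\hat L^{-1}\hat Q D\tilde r\big)^2
=\big\|(\nabla_k r)^\top D\tilde r\big\|_2^2 .
\]
The last equality is the identity already established in the EP computation, applied with test vector $D\tilde r$.

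Next I use the properties of $D(k)=(\hat Q^\top\hat L^{-1}\hat Q)^{-1}=(Q^\top RQ)^{-1}$. It was shown earlier that $D$ is positive definite for every $k>0$, by the disjoint input/output assumption. It also depends continuously on $k$, because $\hat L(k)$ is invertible and depends smoothly on $k$. So on a compact neighborhood $\overline U$ of $k_0$ contained in $\{k>0\}$, the eigenvalues of $D$ are bounded between positive constants $d_-\le d_+$. In particular,
\[
d_-\|\tilde r\|\le\|D\tilde r\|\le d_+\|\tilde r\| .
\]

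For sufficiency, assume $\mathscr D'^\top\hat L^{-1}\hat Q$ has full column rank at $k_0$. Lemma~\ref{lem:coercivity_iff} then gives a neighborhood and a constant $c$ with $\|(\nabla_k r)^\top s\|^2\ge c\|s\|^2$ for all $s$. Setting $s=D\tilde r$ and intersecting with $U$ gives the bound with $c_D=c\,d_-^2$.

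For necessity, suppose the bound holds with some $c_D>0$ on a neighborhood of $k_0$. Take any $s\in\mathbb R^O$ and put $\tilde r=D^{-1}s$. Then
\[
\|(\nabla_k r)^\top s\|^2\ge c_D\|D^{-1}s\|^2\ge c_D d_+^{-2}\|s\|^2 .
\]
This is exactly the hypothesis of the ``only if'' direction of Lemma~\ref{lem:coercivity_iff}, so the rank condition holds at $k_0$. More directly, the bound at $k_0$ alone gives $(\nabla_k r(k_0))^\top$ trivial kernel, i.e.\ $\nabla_k r(k_0)$ has full row rank. The last clause of Lemma~\ref{lem:coercivity_iff} says this is equivalent to the rank condition.

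The main obstacle is confirming that the equivalence in Lemma~\ref{lem:coercivity_iff} is genuinely uniform on a neighborhood. The active set $\mathcal E'$ can only grow under small perturbations of $k$, since nonzero drops stay nonzero. So $\lambda_{\min}(\Lambda)$ and the relevant singular value should not collapse near $k_0$. To avoid this bookkeeping, I would instead argue by continuity of $k\mapsto(\nabla_k r)(\nabla_k r)^\top$. If this matrix is positive definite at $k_0$, it stays uniformly positive definite nearby. Inserting $D$ is then harmless by the two-sided bounds on $D$ above.
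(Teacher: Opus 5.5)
Your proposal is correct and is essentially the paper's argument. The paper also rewrites the CL sum as the EP expression with $\tilde r$ replaced by $D\tilde r$, and concludes from the invertibility of $D$ that the rank condition is unchanged; your two-sided eigenvalue bounds on $D$ over a compact neighborhood just make the uniformity explicit.

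One small correction to your last paragraph: the sentence claiming $\lambda_{\min}(\Lambda)$ should not collapse is false if $\Lambda$ is recomputed on the active set at nearby $k$, because edges inactive at $k_0$ can become active with tiny drops, making $\lambda_{\min}(\Lambda(k))$ arbitrarily small. Your fallback continuity argument for $(\nabla_k r)(\nabla_k r)^\top$, or restricting the sum to the fixed set $\mathcal{E}'(k_0)$, is the right way to get the uniform constant.
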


The other term in \eqref{eq:PhiD_dot} needs to be bounded. In the circuit setting, we have that
\begin{equation*}
  \langle r, \dot{D}r\rangle = \sum_{e \in \mathcal{E}}\left(r^\top \partial_{k_e}Dr\right)\left(\hat{d}_e^\top \hat{L}^{-1}\hat{Q}Dr\right)\left(\hat{d}_e^\top \hat{L}^{-1}\hat{v}\right).
\end{equation*}
We compute
\[
\begin{aligned}\partial_{k_e}D &= -(\hat{Q}^\top \hat{L}^{-1}\hat{Q})^{-1} \hat{Q}^\top \partial_{k_e}\hat{L}^{-1}\hat{Q}(\hat{Q}^\top\hat{L}^{-1}\hat{Q})^{-1}
\\
&=(\hat{Q}^\top \hat{L}^{-1}\hat{Q})^{-1} \hat{Q}^\top \hat{L}^{-1}\left(\hat{d}_e\hat{d}_e^\top\right)\hat{L}^{-1}\hat{Q}(\hat{Q}^\top\hat{L}^{-1}\hat{Q})^{-1}
\\
&=D\hat{Q}^\top \hat{L}^{-1}\left(\hat{d}_e\hat{d}_e^\top\right)\hat{L}^{-1}\hat{Q}D.
\end{aligned}
\]
Hence
\begin{equation}
\begin{aligned}
 \left| \langle r, \dot{D}r\rangle \right|&= \left|\sum_{e \in \mathcal{E}}\left(\hat{d}_e^\top \hat{L}^{-1}\hat{Q}Dr\right)^3\left(\hat{d}_e^\top \hat{L}^{-1}\hat{v}\right)\right|
  \\& \leq 4|\mathcal{E}|\sigma_{\max}(\hat{L}^{-1}\hat{Q}D)^3\sigma_{\max}(\hat{L}^{-1})\|v\|\|r\|^3.
\end{aligned}
\end{equation}

Combining the coercivity estimate with the cubic bound on $\langle r, \dot{D}r\rangle$ gives the following.

\begin{theorem}[Coercivity of CL]
\label{thm:CL_coercivity}
    Suppose $k_0>0$ and that $\mathscr{D}'^\top\hat{L}^{-1}\hat{Q}$ has full column rank at $k_0$. Let $\lambda_D, c_D > 0$ denote the coercivity constants for $\Phi_D$ at $k_0$, i.e., constants such that $\|\nabla\Phi_D\|^2 \ge 2\lambda_D c_D^2\,\Phi_D$ on a neighborhood of $k_0$ (these are assembled from Lemma~\ref{lem:CL_coercivity_iff} and the bounds on $\lambda_{\min}(D), \lambda_{\max}(D)$). Then there is a neighborhood $U$ of $k_0$ and a constant $C_D > 0$ such that for all $k \in U$ the weighted loss $\Phi_D = \frac{1}{2}\langle r, Dr\rangle$ satisfies
\begin{equation}
    \dot{\Phi}_D  \leq -2\lambda_D c_D^2\,\Phi_D + C_D\,\Phi_D^{3/2}.
\end{equation}
In particular, if $k_0 \in \mathcal{S}$ (so that $\Phi_D(k_0) = 0$), then by shrinking $U$ if necessary,
\begin{equation}
    \dot{\Phi}_D \leq -\lambda_D c_D^2\,\Phi_D
\end{equation}
for all $k \in U$.
\end{theorem}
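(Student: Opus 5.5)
The plan is to start from the identity \eqref{eq:PhiD_dot},
\[
\dot\Phi_D = -\sum_{i=1}^M\big\langle RQDr,\partial_{k_i}\nabla_xG\big\rangle^2 + \tfrac12\langle r,\dot D r\rangle ,
\]
and bound the two terms separately on a neighborhood $U$ of $k_0$. Compactness and continuity will be used throughout. Since $k_0>0$, we can pick a compact neighborhood $U_0\subset\{k>0\}$ of $k_0$. On $U_0$ the maps $k\mapsto \hat L(k)^{-1}$ and $k\mapsto D(k)=(\hat Q^\top\hat L^{-1}\hat Q)^{-1}$ are smooth, because $\hat L$ is invertible for $k>0$ and $Q^\top RQ$ is positive definite by the disjoint input/output assumption. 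Hence $\sigma_{\max}(\hat L^{-1})$, $\sigma_{\max}(\hat L^{-1}\hat QD)$, $\lambda_{\max}(D)$ and $\lambda_{\min}(D)^{-1}$ are all bounded on $U_0$ by constants depending only on $U_0$.

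For the dissipative term, apply Lemma~\ref{lem:CL_coercivity_iff} with $\tilde r=r$. The full column rank hypothesis on $\mathscr{D}'^\top\hat L^{-1}\hat Q$ at $k_0$ gives a constant $c>0$ such that the sum is $\ge c\|r\|^2$ on a neighborhood $U_1\subset U_0$. Using $\Phi_D=\frac12\langle r,Dr\rangle\le \frac12\lambda_{\max}(D)\|r\|^2$, this becomes $\ge 2\lambda_D c_D^2\Phi_D$, where $\lambda_D c_D^2 := c/\sup_{U_1}\lambda_{\max}(D)$. This is exactly the assembly of constants described in the statement. The point of the lemma is that the full-rank condition is open, so the constant does not degenerate under perturbation of $k$. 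Some care is needed because $\mathcal E'$ could grow near $k_0$, but enlarging $\mathcal E'$ only adds nonnegative terms, so the lemma already handles this.

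For the remainder term, use the cubic bound derived just before the theorem:
\[
|\langle r,\dot D r\rangle|\le 4|\mathcal E|\,\sigma_{\max}(\hat L^{-1}\hat QD)^3\sigma_{\max}(\hat L^{-1})\|v\|\,\|r\|^3 .
\]
Take suprema of the $k$-dependent factors over $U_1$, and convert $\|r\|^3\le (2\Phi_D/\lambda_{\min}(D))^{3/2}$ using the uniform lower bound on $\lambda_{\min}(D)$. This gives $\frac12|\langle r,\dot Dr\rangle|\le C_D\Phi_D^{3/2}$, and combining the two bounds yields the first inequality on $U=U_1$.

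For the second claim, suppose $k_0\in\mathcal S$, so $r(k_0)=0$ and $\Phi_D(k_0)=0$. Since $\Phi_D$ is continuous, we can shrink $U$ so that $\Phi_D^{1/2}\le \lambda_Dc_D^2/C_D$ on $U$. Then $C_D\Phi_D^{3/2}\le \lambda_D c_D^2\Phi_D$, and substituting into the first inequality gives $\dot\Phi_D\le-\lambda_Dc_D^2\Phi_D$. The only real subtlety, and the step I expect to need the most care, is keeping every constant uniform over a neighborhood rather than just at $k_0$, which is why the compact set $U_0$ is fixed first. This uniformity comes from the openness in Lemma~\ref{lem:CL_coercivity_iff} and from the smooth dependence of $D$ and $\hat L^{-1}$ on $k>0$.
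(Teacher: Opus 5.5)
Your proposal is correct and follows the paper's argument. The dissipative term in \eqref{eq:PhiD_dot} is bounded below via Lemma~\ref{lem:CL_coercivity_iff} and $\lambda_{\max}(D)$, the remainder $\tfrac12\langle r,\dot D r\rangle$ is bounded by the cubic estimate and $\lambda_{\min}(D)$, and the $C_D\Phi_D^{3/2}$ term is absorbed near $k_0\in\mathcal S$; your handling of uniform constants and of a possibly enlarging $\mathcal E'$ is, if anything, more explicit than the paper's.

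One minor point, which the paper shares: the dissipative term is $\|\dot k\|^2$, not $\|\nabla\Phi_D\|^2$, because $\partial_{k_e}\Phi_D=-\dot k_e+\tfrac12\langle r,(\partial_{k_e}D)r\rangle$. So if $\lambda_D,c_D$ are taken literally from the condition $\|\nabla\Phi_D\|^2\ge 2\lambda_Dc_D^2\Phi_D$, you need the extra line $\|\dot k\|^2\ge\|\nabla\Phi_D\|^2-C\|r\|^3$ on $U$, and that error is absorbed into $C_D\Phi_D^{3/2}$.
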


\paragraph{Physical Interpretation for AL}
The learning update for AL has the same product-of-voltage-drops structure as EP, so the coercivity argument applies with the active edge set defined according to the fully-constrained reference state rather than the free state. Here we give the physical interpretation. In AL, the reference state $\tilde{x}_0 = (x_0,\mu_0)$ solves a Poisson equation:
\begin{equation*}
    \begin{bmatrix}
        L & P  & Q
        \\ P^\top & 0&  0
        \\ Q^\top & 0 &0
    \end{bmatrix}
\begin{bmatrix}
    x_0 \\ \lambda_0 \\ \mu_0
\end{bmatrix}
=
 \begin{bmatrix}
        0 \\ v \\ w
         \end{bmatrix}.
\end{equation*}
Therefore, in the reference state, the inputs are clamped in the usual way but the outputs are also clamped to the target $w$.

The clamped state in the small nudge limit is given by
\begin{equation*}
    \begin{bmatrix}
        L & P  & Q
        \\ P^\top & 0&  0
        \\ Q^\top & 0 &0
    \end{bmatrix}
\begin{bmatrix}
    y\\ \alpha \\ \beta
\end{bmatrix}
=
 \begin{bmatrix}
        0 \\ 0 \\ \mu_0
         \end{bmatrix}.
\end{equation*}
Therefore the input is grounded and the output is clamped to the \textit{current error} $\mu_0$. The ODE describing training for edge $e$ is given by
\begin{equation}
    \dot{k}_e = \Delta_e y(\mu_0) \Delta_ex_0 =-\frac{1}{2} \partial_{k_e} \|\mu_0\|^2
\end{equation}
and so AL performs gradient flow on the \textit{current error loss}. Since the active edge set is now defined according to the reference state $x_0$, the quantities $\mathscr{D}'$ and $\Lambda$ are defined in terms of this new active edge set. The coercivity constant becomes $\lambda_{\min}(\Lambda)\sigma_{\min}\left(\mathscr{D}'^\top \hat{L}^{-1}\hat{Q}D\right)^2$, formally the same expression as for CL. This follows after solving the clamped system: $y = \hat{L}^{-1}\hat{Q}D\mu_0$.

\section{Loss of Coercivity and Genericity}
\label{sec:kite}

The coercivity condition of the previous section guarantees exponential decay of the loss, but it is natural to ask whether it always holds. In this section we show that it can fail: we give a physical interpretation of the coercivity condition for single-output circuits, exhibit a concrete circuit (the ``kite'') where coercivity degenerates, and then prove that such failures are non-generic. We close with numerical experiments illustrating the coercivity landscape and its effect on convergence rates.

\paragraph{Physical Interpretation for Loss of Coercivity when $O=1$}
When $O=1$, the matrix $\mathscr{D}'^\top \hat{L}^{-1}\hat{Q}$ is a column vector. Its $e$-th entry is $\Delta_e \hat{y}$, the voltage drop across active edge $e$ in the clamped response to a unit output current $\hat{Q}$. The coercivity condition (full column rank) therefore reduces to the requirement that at least one active edge carries a nonzero clamped voltage drop. In other words, coercivity fails if and only if the free and clamped voltage drops have disjoint support: every edge that carries current in the free state is invisible to the clamped state, and vice versa.

This gives an explicit factorization of the loss decay. Since the residual is the scalar $r = Q^\top x_0 - w$, the EP update takes the form $\dot{k}_e = (\Delta_e x_0)(\Delta_e \hat{y})\, r$, and the loss $\Phi = \frac{1}{2}r^2$ satisfies
\[
\dot{\Phi} = -r^2 \sum_{e \in \mathcal{E}'} (\Delta_e x_0)^2\, (\Delta_e \hat{y})^2 = -2\Phi\, c(k),
\]
where the \emph{coercivity constant} $c(k) := \sum_{e \in \mathcal{E}'} (\Delta_e x_0)^2\, (\Delta_e \hat{y})^2$ depends only on the conductances, not on the residual. Coercivity ($c(k) > 0$) is therefore equivalent to local exponential decay of the loss.

For CL, the Dirichlet-to-Neumann map $D$ is a positive scalar when $O=1$, so $\mathscr{D}'^\top \hat{L}^{-1}\hat{Q}D = D\,\mathscr{D}'^\top \hat{L}^{-1}\hat{Q}$ and the rank condition is identical to that of EP. Thus, for single-output circuits (but also in general), EP and CL lose coercivity under exactly the same conditions.

\paragraph{Example of Degeneracy}

We demonstrate coercivity failure with the kite circuit in Figure~\ref{fig:Circuit_with_failure}. Let node $1$ be the input with $x_1 = 1$ and node $5$ be grounded. Let the output be $Q^\top x = x_2 - 2x_3 + x_4$ with target $w$.

Suppose the conductances satisfy the ratio condition $k_{1,2}/k_{2,5} = k_{1,4}/k_{4,5}$. Then nodes $2$ and $4$ see the same voltage divider ratio between the source and ground, forcing $x_2 = x_4$. Since edges $\{2,3\}$ and $\{3,4\}$ then carry no net current, we also get $x_3 = x_2$. Therefore $Q^\top x_0 = 0$ and the only active edges (nonzero free-state voltage drops) are $\{1,2\}$, $\{1,4\}$, $\{2,5\}$, and $\{4,5\}$.

The condition $k_{2,3} = k_{3,4}$, which is independent of the ratio condition, also causes $Q^\top x = 0$. This is because the middle branch acts as a voltage divider and equality of the two conductances means the voltage divides exactly in half. The intersection of this condition with the ratio condition is where degeneracy occurs.  

\paragraph{Loss of coercivity on the solution manifold ($w = 0$)} The system is on the solution manifold $\{Q^\top x_0 = 0\}$, since the ratio condition alone (codimension $1$ in $k$-space) forces $Q^\top x_0 = 0$. At generic points of this ratio set, coercivity holds. For coercivity to fail, the clamped voltage drops must also vanish on all active edges, i.e.\ $y_2 = y_4 = 0$. One checks that this only occurs when $k_{2,3} = k_{3,4}$. The non-coercive points on the solution manifold are therefore those satisfying both the ratio condition and $k_{2,3} = k_{3,4}$: a codimension-$2$ subset of $k$-space. At these points, $r = 0$ and the loss vanishes, but the learning dynamics are slow near these points, as the exponential convergence guarantee is lost.

\paragraph{Spurious fixed points ($w \neq 0$).} Now $r = -w \neq 0$. The same conductances (ratio condition plus $k_{2,3} = k_{3,4}$) produce the same disjoint support between free and clamped voltage drops. The update rule vanishes identically: the system is stuck at nonzero loss. The multiplier $D$ cannot resolve this for CL, since it only rescales the forcing without changing its support. These are spurious fixed points of the learning dynamics that lie off the solution manifold.

\begin{figure}[ht]
  \centering
  \begin{circuitikz}
  \coordinate (x1) at (0,0);      
  \coordinate (x2) at (2,1.5);    
  \coordinate (x4) at (2,-1.5);   
  \coordinate (x3) at (2,0);      
  \coordinate (G)  at (6,0);      

\node[circ] (x1) at (0,0) {};
\node[circ] (x2) at (2,1.5) {};
\node[circ] (x3) at (2,0) {};
\node[circ] (x4) at (2,-1.5) {};
\node[circ] (x5) at (6,0) {};
\node        (G)  at (6,0) {};
  \node[left]  at (x1) {$x_1$};
  \node[above] at (x2) {$x_2$};
  \node[right] at (x3) {$x_3$};
  \node[below] at (x4) {$x_4$};
   \node[below] at (x5) {$x_5$};

  \draw (x1) to[R=$k_{1,2}$] (x2);
  \draw (x1) to[R=$k_{1,4}$] (x4);
  \draw (x2) to[R=$k_{2,3}$] (x3);
  \draw (x3) to[R=$k_{3,4}$] (x4);

  \draw (x2) to[R=$k_{2,5}$] (x5);
  \draw (x4) to[R=$k_{4,5}$] (x5);
  \node[ground, rotate=90] at (G) {};
\end{circuitikz}
  \caption{A kite circuit. Under the ratio condition $k_{1,2}/k_{2,5} = k_{1,4}/k_{4,5}$ and $k_{2,3}=k_{3,4}$, the free and clamped voltage drops have disjoint support and coercivity fails.}
  \label{fig:Circuit_with_failure}
\end{figure}

\paragraph{Comparison with AL}
We find that AL produces similar albeit not exactly the same degeneracies. In the case of $w = 0$, we find the same loss of coercivity on the solution manifold. This is a result of the fact that on the solution manifold, the free state of EP and reference state of AL are identical and so AL provides nothing new to remove the non-coercive set. 

 When $w \ne 0$, the degenerate points which were spurious fixed points for EP are no longer fixed points for AL. Clamping the output to $w$ in the reference state forces voltage drops across $k_{2,3}$ and $k_{3,4}$, which were absent in the EP free state. The AL update therefore acts nontrivially on $k_{2,3}$ and $k_{3,4}$, while the four outer conductances remain fixed for the same reason they were fixed under EP. The update is moreover symmetric under the swap $k_{2,3} \leftrightarrow k_{3,4}$, so $k_{2,3} = k_{3,4}$ is preserved. The dynamics are therefore confined to the codimension-$2$ invariant manifold on which the ratio condition and $k_{2,3} = k_{3,4}$ hold, and on which the output never changes from $0$. However, this is not a contradiction with AL being gradient flow; it is just that it is gradient flow of the current error. The current error scales with $\|k\|$, and the degenerate dynamics serve only to decrease $\|k\|$. So $k_{2,3}$ and $k_{3,4}$ simply decrease to $0$, which is a local minimum of the current error loss at the boundary. 

\paragraph{Genericity of coercivity}  The $w=0$ case of the kite example shows that the coercivity condition can fail on the solution manifold for special choices of target. However, such failures are non-generic. Consider the map
\[
\Psi(k) := Q^\top x_0(k)
\]
from $\{k>0\}$ to $\mathbb{R}^O$. Since $x_0(k) = \hat{L}^{-1}(k)\hat{v}$, the map $\Psi$ is real-analytic (in fact, rational) in $k$. The solution manifold for target $w$ is $\mathcal{S} = \Psi^{-1}(w)$, and the coercivity condition at $k \in \mathcal{S}$ is exactly the condition that $\nabla_k\Psi(k)$ is surjective---that is, $w$ is a regular value of $\Psi$ at $k$.

\begin{proposition}[Genericity of Coercivity]
\label{prop:genericity}
Define the set of \emph{critical targets}
\[
\mathcal{W}_{\rm crit} := \Psi\big(\{k > 0 : \operatorname{rank} \nabla_k\Psi(k) < O\}\big) \subset \mathbb{R}^O.
\]
Then $\mathcal{W}_{\rm crit}$ has Lebesgue measure zero in $\mathbb{R}^O$. For any $w \notin \mathcal{W}_{\rm crit}$, the solution manifold $\mathcal{S} = \Psi^{-1}(w)$ is a smooth submanifold of dimension $M-O$ and the coercivity condition holds at every point of $\mathcal{S}$.
\end{proposition}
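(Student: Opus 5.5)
The plan is to apply Sard's theorem directly to $\Psi$ and then read off the two conclusions from the regular value theorem together with Lemma~\ref{lem:coercivity_iff}.

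\textbf{Smoothness of $\Psi$.} The domain $\{k>0\}$ is an open subset of $\mathbb{R}^M$. On it, $\hat{L}(k)$ is invertible, because $L(k)$ is positive definite on $\ker P^\top$ whenever $k>0$. The entries of $\hat{L}(k)$ are affine in $k$. By Cramer's rule, the entries of $\hat{L}^{-1}(k)$ are therefore rational functions whose denominator $\det\hat{L}(k)$ does not vanish on $\{k>0\}$. Hence $\Psi(k)=\hat{Q}^\top\hat{L}^{-1}(k)\hat{v}$ is $C^\infty$ (indeed real-analytic) from $\{k>0\}$ to $\mathbb{R}^O$.

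\textbf{Measure zero of $\mathcal{W}_{\rm crit}$.} The set $\{k>0:\operatorname{rank}\nabla_k\Psi(k)<O\}$ is exactly the critical set of $\Psi$, so $\mathcal{W}_{\rm crit}$ is the set of critical values. Sard's theorem requires $\Psi\in C^{r}$ with $r\ge\max(1,M-O+1)$, and $\Psi$ is $C^\infty$. It follows that $\mathcal{W}_{\rm crit}$ has Lebesgue measure zero. This covers both $M\ge O$ and $M<O$; in the latter case every point is critical and the image of $\{k>0\}$ itself has measure zero.

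\textbf{Structure of $\mathcal{S}$ for a regular target.} Fix $w\notin\mathcal{W}_{\rm crit}$. Every $k\in\Psi^{-1}(w)$ is then a regular point, meaning $\nabla_k\Psi(k)=\nabla_k r(k)$ has full row rank $O$. The regular value (implicit function) theorem gives that $\mathcal{S}$ is either empty or an embedded smooth submanifold of $\{k>0\}$ of dimension $M-O$. We are assuming it is non-empty.

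\textbf{Coercivity on $\mathcal{S}$.} At each $k_0\in\mathcal{S}$, full row rank of $\nabla_k r(k_0)$ is, by the last assertion of Lemma~\ref{lem:coercivity_iff}, equivalent to $\mathscr{D}'^\top\hat{L}^{-1}\hat{Q}$ having full column rank at $k_0$. This is the coercivity condition used in Theorem~\ref{thm:EP_coercivity}, and via Lemma~\ref{lem:CL_coercivity_iff} also in Theorem~\ref{thm:CL_coercivity}.

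The link is the identity $\|(\nabla_k r)^\top\tilde r\|^2=\|\Lambda^{1/2}\mathscr{D}'^\top\hat{L}^{-1}\hat{Q}\tilde r\|^2$ computed above. Since $\Lambda$ is positive definite on the active edges, $(\nabla_k r)^\top$ is injective exactly when $\mathscr{D}'^\top\hat{L}^{-1}\hat{Q}$ is injective.

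\textbf{Expected obstacle.} There is little real difficulty. The main care points are:
\begin{itemize}
\item checking the smoothness order needed for Sard's theorem;
\item noting that the domain is open, so the regular value theorem applies in the relative topology;
\item noting that the active edge set $\mathcal{E}'$ can vary with $k$, so the equivalence must be used pointwise at each $k_0\in\mathcal{S}$ rather than uniformly.
\end{itemize}
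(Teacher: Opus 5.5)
Your proposal is correct and follows essentially the same route as the paper: Sard's theorem applied to the smooth (indeed rational) map $\Psi$ shows that the critical values $\mathcal{W}_{\rm crit}$ have measure zero, and the Implicit Function (regular value) Theorem, together with the identification of full row rank of $\nabla_k\Psi$ with the coercivity condition, gives the remaining conclusions. Your extra details simply flesh out steps the paper states tersely: smoothness via Cramer's rule, the differentiability order needed for Sard, the $M<O$ case, and the pointwise use of Lemma~\ref{lem:coercivity_iff}.
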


\begin{proof}
$\mathcal{W}_{\rm crit}$ is the set of critical values of the smooth map $\Psi$. By Sard's theorem (see \cite{bates1993} for precise smoothness hypotheses), $\mathcal{W}_{\rm crit}$ has Lebesgue measure zero in $\mathbb{R}^O$. For $w \notin \mathcal{W}_{\rm crit}$, every $k \in \Psi^{-1}(w)$ satisfies $\operatorname{rank} \nabla_k\Psi(k) = O$, which is the coercivity condition. The Implicit Function Theorem then gives that $\Psi^{-1}(w)$ is a smooth manifold of dimension $M-O$.
\end{proof}

Note that the rank condition $\operatorname{rank}\nabla_k\Psi = O$ is exactly the coercivity condition of Theorem~\ref{thm:EP_coercivity}, for which Lemma~\ref{lem:coercivity_iff} provides a concrete necessary and sufficient criterion. If coercivity holds at any single point $k \in \{k > 0\}$, then $\Psi$ is a submersion at $k$, so its image contains an open set and in particular has positive Lebesgue measure. Since $\mathcal{W}_{\rm crit}$ has measure zero, the set of achievable regular values $\operatorname{range}(\Psi) \setminus \mathcal{W}_{\rm crit}$ has positive measure. The proposition is therefore non-vacuous whenever the circuit admits at least one coercive configuration.
\paragraph{Numerical Experiment}
We illustrate the coercivity landscape and its effect on convergence with numerical experiments on the kite circuit. We parameterize conductances as
\[
(k_{1,2},\; k_{1,4},\; k_{2,3},\; k_{3,4},\; k_{2,5},\; k_{4,5}) = (a + \alpha,\; a,\; c + \tfrac{\beta}{2},\; c - \tfrac{\beta}{2},\; b,\; b)
\]
with base values $a = 2$, $b = 1.5$, $c = 2$, so that $\alpha = k_{1,2} - k_{1,4}$ measures deviation from the ratio condition and $\beta = k_{2,3} - k_{3,4}$ measures deviation from $k_{2,3} = k_{3,4}$. The non-coercive set sits at $(\alpha, \beta) = (0,0)$.

Figure~\ref{fig:coercivity_heatmap} shows the coercivity constant $c(k) = \sum_{e \in \mathcal{E}'} (\Delta_e x_0)^2 (\Delta_e \hat{y})^2$ on this two-parameter slice. The coercivity degenerates only at the origin (a codimension-$2$ point), and the level sets are roughly elliptical, elongated in the $\alpha$-direction. The circuit is more sensitive to breaking $k_{2,3} = k_{3,4}$ than to breaking the ratio condition. The white dashed contours show the level sets of the output map $\Psi(k) = Q^\top x_0$. The $w = 0$ contour passes through the non-coercive point, confirming that $0 \in \mathcal{W}_{\rm crit}$, while the $w \neq 0$ contours avoid it. This demonstrates Proposition~\ref{prop:genericity}: perturbing the target $w$ away from the critical value shifts the solution manifold clear of the non-coercive set, ensuring that every point on the manifold is coercive.

Figure~\ref{fig:convergence_semilog} shows the EP learning dynamics started from a fixed but randomly chosen perturbation of $k^*$ at several values of $\beta$ (with $\alpha = 0$, $w = 0$). On the semi-log scale, the loss decays as a straight line in every case, confirming exponential convergence. The decay rate increases with $\beta$, tracking the coercivity constant $c(k^*)$. For a fixed target $w$, one can improve the rate by moving $k$ away from the non-coercive set, but near the degenerate point convergence is inherently slow.

Shifting the target $w$ is a natural remedy: for a classification task, this amounts to choosing a different label encoding, which is easy to do in practice. By Proposition~\ref{prop:genericity}, a generic choice of $w$ makes the solution manifold fully coercive, guaranteeing local exponential convergence near every point on the manifold. However, the theory is local: the convergence ball must be small enough to avoid the spurious fixed points discussed above, which lie off the manifold but are still fixed points of the EP dynamics. From a global convergence perspective, shifting $w$ alone may not suffice if the initial condition is near such a point. On the other hand, the non-coercive set and the associated spurious fixed points are both codimension-$2$ in $k$-space, so generic trajectories should miss them. 



\begin{figure}[ht]
  \centering
  \includegraphics[width=0.75\textwidth]{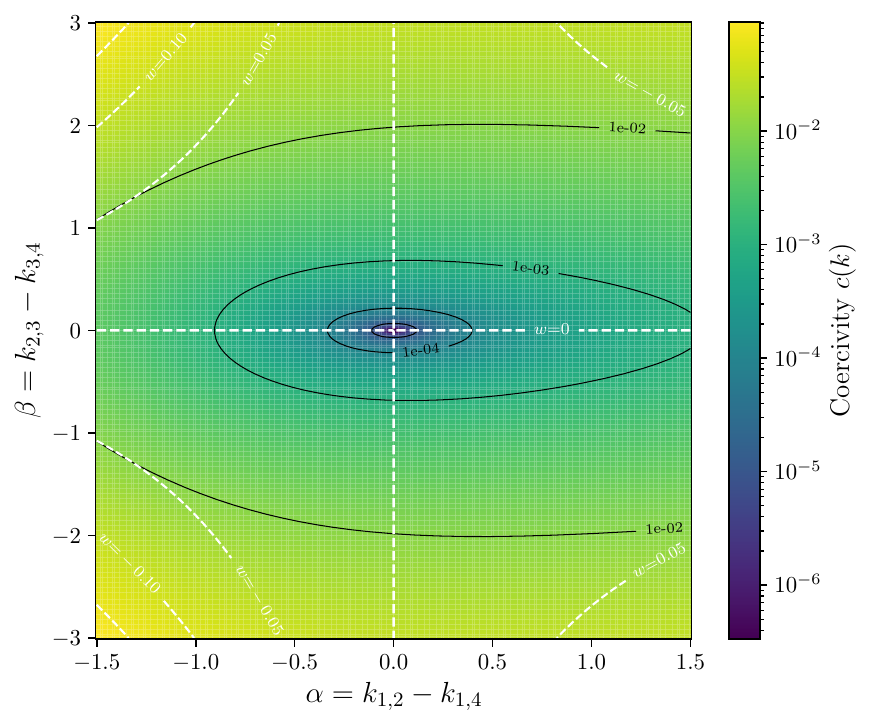}
  \caption{Coercivity constant $c(k)$ on the kite circuit, parameterized by $\alpha = k_{1,2} - k_{1,4}$ and $\beta = k_{2,3} - k_{3,4}$. White dashed curves are level sets of the output $w = Q^\top x_0$; black curves are level sets of $c(k)$.}
  \label{fig:coercivity_heatmap}
\end{figure}

  \begin{figure}[ht]                                                                                                    
    \centering                                              
    \begin{minipage}[t]{0.48\textwidth}
      \centering                                                                                                        
      \includegraphics[width=\textwidth]{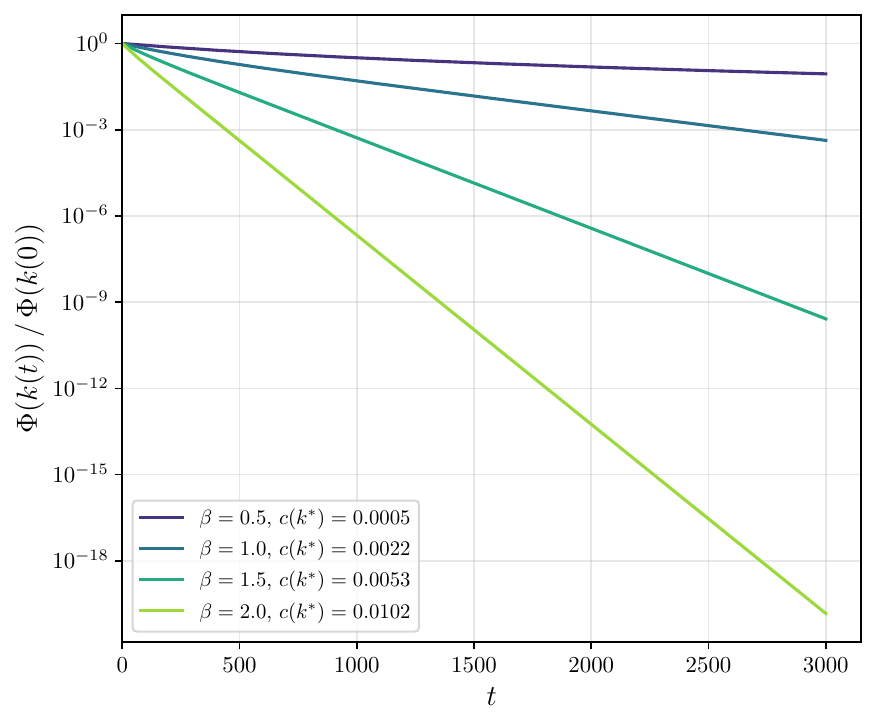}\\
      (a)                                                                                                               
    \end{minipage}\hfill                                    
    \begin{minipage}[t]{0.48\textwidth}                                                                                 
      \centering                                                                                                        
      \includegraphics[width=\textwidth]{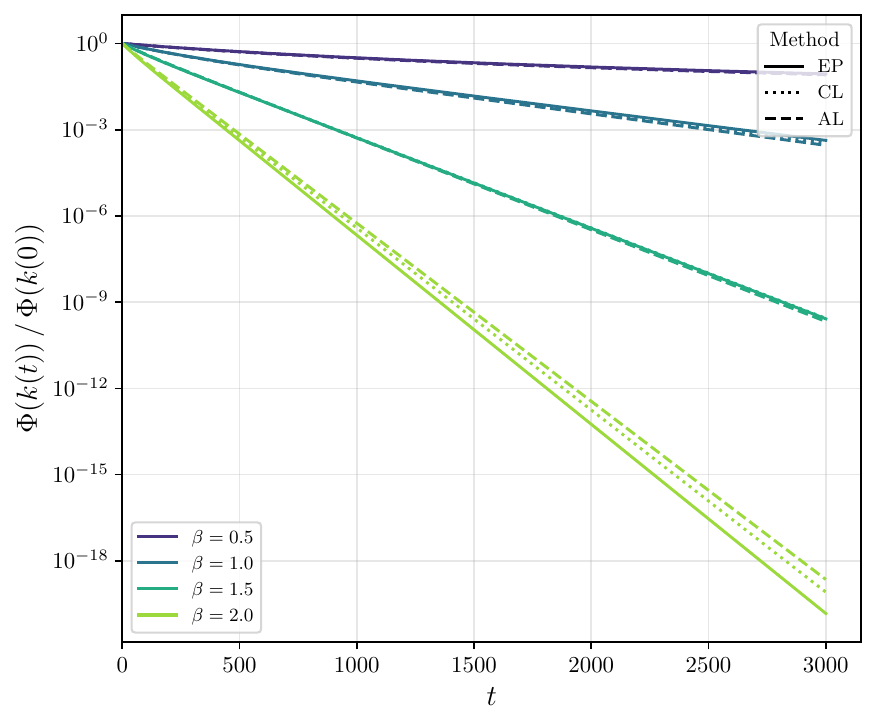}\\
      (b)                                                                                                               
    \end{minipage}                                          
    \caption{Normalized loss $\Phi(k(t))/\Phi(k(0))$ on the kite circuit, started from perturbations of $k^*$ on the    
  $w=0$ manifold at $\alpha = 0$ and varying $\beta$. (a) EP dynamics: convergence is exponential at every $\beta > 0$, 
  with rate controlled by the coercivity constant $c(k^*)$. (b) Comparison of EP, CL, and AL from the same initial      
  conditions; all three methods inherit the same coercivity-controlled exponential decay, with rates rescaled by factors
   of $D(k^*)$ and $D(k^*)^2$ respectively.}                
    \label{fig:convergence_semilog}
  \end{figure}

\section{Dynamic Stability and Local Convergence}
\label{sec:convergence}

\begin{remark}
For EP, the full row rank condition on $\nabla_k r$ rules out spurious fixed points. Since EP is gradient flow of $\Phi = \frac{1}{2}\|r\|^2$, any fixed point satisfies $\nabla_k \Phi = (\nabla_k r)^\top r = 0$. If $\nabla_k r$ has full row rank, then $(\nabla_k r)^\top$ has trivial null space, forcing $r = 0$. Thus, EP can only get stuck at a point with nonzero loss if $\nabla_k r$ is rank-deficient there, as in the kite circuit example above. The same applies to AL, which is gradient flow of the adjoint loss $\Phi^*$.
\end{remark}

\begin{theorem}\label{thm:local_convergence}
Suppose that $k^*$ is a point on the solution manifold $\mathcal{S}:=\{k \in \mathbb{R}^M \mid r(k) = 0\}$ such that $\nabla_k r$ has full row rank. (For EP and CL, this holds exactly when the active-incidence conditions of Theorems~\ref{thm:EP_coercivity} and~\ref{thm:CL_coercivity} are satisfied.) Then there exists a positive real number $\delta$ such that for all $\|k(0) - k^*\| < \delta$, we have $\lim_{t \to \infty} k(t) \in \mathcal{S}$ and $\|r(k(t))\|$ decays exponentially.
\end{theorem}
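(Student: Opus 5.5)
The argument is a bootstrap that combines the local coercivity estimates of Section~\ref{sec:coercivity} with a length bound on trajectories of (near-)gradient flows. We work with a Lyapunov quantity $V$: $V=\Phi$ for EP, $V=\Phi_D$ for CL, and $V=\Phi^*$ for AL.

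First fix a closed ball $\bar B_\rho(k^*)\subset\{k>0\}$ on which four facts hold. First, $\nabla_k r$ has full row rank and $\sigma_{\min}(\nabla_k r)\ge \sigma>0$; this follows by continuity from the hypothesis and Lemma~\ref{lem:coercivity_iff}. Second, the coercivity inequalities hold: $\dot\Phi\le -2\lambda c^2\Phi$ (Theorem~\ref{thm:EP_coercivity}) and $\dot\Phi_D\le-\lambda_Dc_D^2\Phi_D$ (Theorem~\ref{thm:CL_coercivity}, shrunk using $k^*\in\mathcal S$). For AL, the same estimate holds for $\Phi^*$ by the EP substitution described earlier. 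Third, $V$ is comparable to $\|r\|^2$, i.e. $a\|r\|^2\le V\le A\|r\|^2$. For CL this uses $D$ uniformly positive definite on the ball. For AL it uses $\mu_0=D_{\rm AL}$-type relations: near $k^*$, $\mu_0$ and $r$ vanish together and the map between them is locally bi-Lipschitz by the Implicit Function Theorem applied to the reference KKT system. Fourth, the velocity is controlled by the residual, $\|\dot k\|\le C\|r\|$. This is immediate from the explicit update formulas $\dot k_e=\langle RQ(\cdot)r,\partial_{k_e}\nabla_xG\rangle$, since $\hat L^{-1}$, $D$ and $x_0$ are bounded on the compact ball.

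Second, on any interval $[0,T)$ during which $k(t)\in B_\rho(k^*)$, Grönwall gives $V(t)\le V(0)e^{-\kappa t}$, hence $\|r(t)\|\le \sqrt{A/a}\,\|r(0)\|e^{-\kappa t/2}$. Integrating the velocity bound then gives the path-length estimate
\[
\|k(t)-k(0)\|\le\int_0^t\|\dot k\|\,ds\le \frac{2C}{\kappa}\sqrt{A/a}\,\|r(k(0))\|.
\]
Since $r$ is Lipschitz (indeed smooth) with $r(k^*)=0$, we have $\|r(k(0))\|\le L\|k(0)-k^*\|$. Now choose $\delta<\rho/2$ so small that $\delta+\frac{2CL}{\kappa}\sqrt{A/a}\,\delta<\rho/2$. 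A continuity argument follows: let $T$ be the first exit time from $B_{\rho/2}(k^*)$. If $T$ were finite, the bound above would place $k(T)$ strictly inside $B_{\rho/2}$, which is a contradiction. Hence $T=\infty$, and in particular the ODE solution exists globally because $\dot k$ is bounded on the ball.

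Finally, $\int_0^\infty\|\dot k\|<\infty$, so $k(t)$ is Cauchy and converges to some $k_\infty\in\bar B_{\rho/2}$. Since $\|r(k(t))\|\to0$ exponentially and $r$ is continuous, $r(k_\infty)=0$, i.e. $k_\infty\in\mathcal S$. The limit is moreover approached exponentially fast, by the tail of the length integral.

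The main obstacle is bookkeeping for AL and CL rather than EP. For EP the comparability of $V$ and $\|r\|^2$ is trivial. For AL we must check that $\|\mu_0\|\sim\|r\|$ near $k^*$, or alternatively run the whole argument with $\mu_0$ in place of $r$, using that $\mathcal S=\{\mu_0=0\}$ and that $\nabla_k\mu_0$ has full rank at $k^*$. I would try the latter first, since it avoids relating the two states entirely. We must also make sure that every neighborhood is chosen before $\delta$, so that all constants are uniform on one compact ball.
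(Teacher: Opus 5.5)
Your proposal is correct and follows the paper's bootstrap: coercivity gives exponential decay, $\|\dot k\|\le C\|r\|$ gives a finite path length, a small $\delta$ keeps the trajectory inside the neighborhood, and a Cauchy argument gives the limit. The only differences are minor. You conclude $k_\infty\in\mathcal S$ directly from continuity of $r$, which makes the paper's Implicit Function Theorem chart and its bound $\operatorname{dist}(k,\mathcal S)\le 2C_1\|r(k)\|$ unnecessary. You also treat AL explicitly through $\|\mu_0\|\sim\|r\|$, which does hold since $\mu_0=\pm D r$ for the linear circuit; the paper's proof leaves AL implicit.
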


\begin{proof}
Since $\nabla_k r(k^*)$ has full row rank, we may reorder coordinates and write
\[
k = (u,v), \qquad k^* = (u^*, v^*),
\]
with $u \in \mathbb{R}^{M-O}$ and $v \in \mathbb{R}^{O}$, such that
\[
E := \frac{\partial r}{\partial v}(k^*)
\]
is invertible.

By the Implicit Function Theorem, there exist open neighborhoods $U \subset \mathbb{R}^{M-O}$ of $u^*$ and $V \subset \mathbb{R}^{O}$ of $v^*$, and a $C^1$ map
\[
\psi : U \to V
\]
such that
\[
\mathcal{S} \cap (U \times V) = \{ (u, \psi(u)) : u \in U \}.
\]
In particular, $U \times V$ is an open neighborhood of $k^*$.

By shrinking $U \times V$ if necessary, we may assume the following hold uniformly for all $k \in U \times V$:
\begin{enumerate}
    \item[(i)] $\dfrac{\partial r}{\partial v}(k)$ is invertible with $\left\|\left(\dfrac{\partial r}{\partial v}(k)\right)^{-1}\right\| \le C_1$,
    \item[(ii)] $|\dot{k}| \le C_2 \|r(k)\|$,
    \item[(iii)] For EP: $\dot{\Phi} \leq -2\lambda c^2\,\Phi$ (Theorem~\ref{thm:EP_coercivity}). For CL: $\dot{\Phi}_D \leq -\lambda_D c_D^2\,\Phi_D$ (Theorem~\ref{thm:CL_coercivity}), with $D$ having uniformly bounded eigenvalues so that $\tfrac{1}{2}\lambda_{\min}(D)\|r\|^2 \leq \Phi_D \leq \tfrac{1}{2}\lambda_{\max}(D)\|r\|^2$,
    \item[(iv)] $\psi(U) \subset V$.
\end{enumerate}

Note that for $k = (u,v) \in U \times V$,
\begin{equation}
\operatorname{dist}(k,\mathcal S)
\le \|(u,v)-(u,\psi(u))\|
= \|v-\psi(u)\|.
\label{eq:dist-bound-v}
\end{equation}

Using the Fundamental Theorem of Calculus along the segment from $(u,\psi(u))$ to $(u,v)$,
\[
r(u,v)=r(u,v) - r(u,\psi(u))
= 
\int_0^1
\frac{\partial r}{\partial v}\big(u, \psi(u) + t(v-\psi(u))\big)\, dt \,(v-\psi(u)).
\]
Define
\[
A :=
\int_0^1
\frac{\partial r}{\partial v}\big(u, \psi(u) + t(v-\psi(u))\big)\, dt.
\]
Then
\begin{equation}
r(u,v) = A (v-\psi(u)).
\label{eq:r-factored}
\end{equation}
By (i), $A$ is a small perturbation of $E$, and hence invertible with $\|A^{-1}\| \le 2C_1$. From \eqref{eq:r-factored},
\[
\|v-\psi(u)\| \le \|A^{-1}\| \|r(u,v)\| \le 2C_1 \|r(k)\|.
\]

Combining with \eqref{eq:dist-bound-v},
\begin{equation}
\operatorname{dist}(k,\mathcal S) \le 2C_1 \|r(k)\|
\quad \text{for all } k \in U \times V.
\label{eq:dist-bound-r}
\end{equation}

As long as $k(t) \in U \times V$, conditions (i)--(iv) hold. By (iii), coercivity gives
\[
\dfrac{d}{dt}\|r(k(t))\|^2 \le -2\lambda c^2\, \|r(k(t))\|^2,
\]
and hence
\[
\|r(k(t))\| \le e^{-\lambda c^2\, t}\, \|r(k(0))\|
\]
for EP. For CL, the analogous inequality has
\[\|r(k(t))\| \le \sqrt{\tfrac{\lambda_{\max}(D)}{\lambda_{\min}(D)}}\,e^{-\lambda_D c_D^2\, t/2}\, \|r(k(0))\|.\]
In either case, there exist $K\ge 1$ and $\nu>0$ such that \[\|r(k(t))\| \le Ke^{-\nu t}\|r(k(0))\|\] (with $K=1,\,\nu=\lambda c^2$ for EP; $K=\sqrt{\lambda_{\max}(D)/\lambda_{\min}(D)},\,\nu = \lambda_D c_D^2/2$ for CL).          

Since $r(k^*) = 0$ and $r$ is $C^1$, there exists $C_3 > 0$ such that $\|r(k)\| \leq C_3\|k - k^*\|$ on $U \times V$. By (ii), the total displacement of $k$ is controlled:
\[
\|k(t) - k(0)\| \le C_2 \int_0^t \|r(k(s))\|\, ds \le \dfrac{KC_2}{\nu}\, \|r(k(0))\| \leq \dfrac{KC_2 C_3}{\nu}\, \delta.
\]
By the triangle inequality,
\[
\|k(t) - k^*\| \leq \|k(t) - k(0)\| + \|k(0) - k^*\| < \left(\dfrac{KC_2 C_3}{\nu}+1\right)\delta.
\]
Choose $\delta$ small enough that $\left(\dfrac{KC_2 C_3}{\nu}+1\right)\delta < \operatorname{dist}(k^*, \partial(U \times V))$. By continuity, $k(t) \in U \times V$ for $t$ in some maximal interval $[0,T)$. The bounds above hold on $[0,T)$ and show $k(t)$ remains bounded away from $\partial(U \times V)$, so $T = \infty$.

It remains to show $\lim_{t \to \infty} k(t)$ exists. For $0 \le s \le t$, the same estimate as above gives
\[
\|k(t) - k(s)\| \le C_2 \int_s^t \|r(k(\tau))\|\, d\tau
\le \dfrac{KC_2}{\nu}\, e^{-\nu\, s}\, \|r(k(0))\|,
\]
which tends to $0$ as $s \to \infty$ uniformly in $t \ge s$. Hence $\{k(t)\}$ is Cauchy and $k_\infty := \lim_{t \to \infty} k(t)$ exists. Since $\|r(k(t))\|$ decays exponentially for all $t \ge 0$, \eqref{eq:dist-bound-r} gives $\operatorname{dist}(k(t),\mathcal S) \to 0$, and since $\mathcal{S}$ is closed, $k_\infty \in \mathcal{S}$.
\end{proof}

\section{Convergence of the Discrete Learning Rule}
\label{sec:discrete}

The analysis to this point has been for the continuous-time limit $\tau \to 0$. In practice the learning rule \eqref{eq:continuous-dynamics} is applied one step at a time at a finite learning rate $\tau > 0$ and finite nudge $\eta > 0$. It is natural to ask whether the discrete dynamics inherit the exponential convergence of the continuous flow. In this section we show that, under the same coercivity condition as Theorems~\ref{thm:EP_coercivity} and \ref{thm:CL_coercivity}, the discrete EP and CL rules drive the loss to zero geometrically in the number of iterations, provided the learning rate and the initial loss are sufficiently small. The argument combines the descent lemma for smooth optimization with a Polyak--\L{}ojasiewicz (PL) inequality coming directly from coercivity.

\paragraph{Discrete update identity for EP}

The EP update \eqref{eq:continuous-dynamics} at finite nudge, specialized to the resistor-network energy $G(x,k) = \tfrac{1}{2}\langle x, L(k) x\rangle$, admits an exact identity that separates gradient descent from an $O(\eta)$ physical correction. Since $\partial_{k_e} G = \tfrac{1}{2}(\Delta_e x)^2$ and the nudged state is exactly $x_\eta = x + \eta y$ with $y := \hat{L}^{-1}\hat{Q}r$ (since $\nabla_x G$ is linear in $x$, the equilibrium conditions are linear and $x_\eta$ is affine in $\eta$), we have
\[
\frac{\partial_{k_e} G(x_\eta; k) - \partial_{k_e} G(x_0; k)}{\eta} = (\Delta_e x_0)(\Delta_e y) + \tfrac{\eta}{2}\,(\Delta_e y)^2.
\]
Recalling from Section~\ref{sec:coercivity} that $\partial_{k_e}\Phi = -(\Delta_e x_0)(\Delta_e y(r))$ with $y(r) = \hat{L}^{-1}\hat{Q}r$, the discrete update for each edge $e$ takes the form
\begin{equation}\label{eq:EP_discrete}
k_e^{t+1} - k_e^t = \tau\,(\Delta_e x_0^t)(\Delta_e y^t) + \tfrac{\tau\eta}{2}\,(\Delta_e y^t)^2,
\end{equation}
where $y^t := \hat{L}^{-1}\hat{Q}r^t$. The first term is the continuous-time gradient descent step; the second is an $O(\eta)$ correction from the finite-nudge expansion. In vector form,
\[
k^{t+1} - k^t = -\tau\,\nabla\Phi^t + \tfrac{\tau}{2}\,\varepsilon^t, \qquad \varepsilon^t \in \mathbb{R}^M, \quad \varepsilon^t_e := \eta\,(\Delta_e y^t)^2.
\]

\paragraph{Convergence theorem for EP}

\begin{theorem}\label{thm:discrete_EP}
Let $k^* \in \mathcal{S}$ satisfy the coercivity condition of Theorem~\ref{thm:EP_coercivity}, and write $\lambda := \lambda_{\min}(\Lambda(k^*))$, $c^2 := \sigma_{\min}(\mathscr{D}'^\top \hat{L}^{-1}\hat{Q})^2\big|_{k^*}$. There exist a neighborhood $U \ni k^*$, a Lipschitz constant $L_\Phi > 0$ for $\nabla\Phi$ on $U$, a constant $C > 0$ with $\|\Delta y(r)\| \le C\,\|r\|$ on $U$, and a radius $\delta > 0$ such that for any learning rate, nudge, and initial condition satisfying
\[
0 < \tau \le \tfrac{1}{L_\Phi}, \qquad \|k^0 - k^*\| < \delta, \qquad \Phi(k^0) \le \tfrac{\lambda c^2}{8\eta^2 C^4},
\]
the iterates of \eqref{eq:EP_discrete} remain in $U$ for all $t \ge 0$ and satisfy
\[
\Phi(k^{t+1}) \le \gamma\,\Phi(k^t), \qquad \gamma := 1 - \tfrac{\lambda c^2\,\tau}{4} \in (0,1).
\]
\end{theorem}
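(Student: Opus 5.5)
We combine the descent lemma for the $L_\Phi$-smooth function $\Phi$ on $U$ with a Polyak--\L{}ojasiewicz inequality from Theorem~\ref{thm:EP_coercivity}, treating the finite-nudge term $\tfrac{\tau}{2}\varepsilon^t$ as a perturbation. The plan has three parts: fix $U$, prove a one-step contraction while $k^t \in U$, then show the iterates never leave $U$.

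\emph{Choice of $U$ and constants.} Choose $U$ to be a closed ball around $k^*$ inside $\{k>0\}$ on which three facts hold.
\begin{itemize}
\item[(a)] $\Phi$ is $C^2$, so $\nabla\Phi$ is $L_\Phi$-Lipschitz. This holds because $\hat L^{-1}$ is rational in $k$.
\item[(b)] Theorem~\ref{thm:EP_coercivity} gives, after shrinking $U$, the PL inequality $\|\nabla\Phi\|^2 = \|(\nabla_k r)^\top r\|^2 \ge \tfrac{\lambda c^2}{2}\|r\|^2 = \lambda c^2\,\Phi$. The continuity of $\sigma_{\min}$ and $\lambda_{\min}(\Lambda)$ costs at most a factor $2$ here.
\item[(c)] $\|\varepsilon^t\| \le \eta\sum_e(\Delta_e y^t)^2 \le \eta C^2\|r^t\|^2 = 2\eta C^2\Phi(k^t)$. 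This uses the uniform bound on $\hat L^{-1}$, which gives $\|\Delta y(r)\|\le C\|r\|$.
\end{itemize}

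\emph{One-step estimate.} For $k^t,k^{t+1}\in U$ (convexity of the ball makes the segment lie in $U$), the descent lemma gives
\[
\Phi(k^{t+1}) \le \Phi(k^t) + \langle \nabla\Phi^t, s\rangle + \tfrac{L_\Phi}{2}\|s\|^2, \qquad s := -\tau\nabla\Phi^t + \tfrac{\tau}{2}\varepsilon^t .
\]
Expanding, and using $\|a+b\|^2\le 2\|a\|^2+2\|b\|^2$ with $\tau L_\Phi\le 1$, yields
\[
\Phi^{t+1} \le \Phi^t - \tfrac{\tau}{2}\|\nabla\Phi^t\|^2 + \tfrac{\tau}{2}\|\nabla\Phi^t\|\|\varepsilon^t\| + \tfrac{\tau}{4}\|\varepsilon^t\|^2 .
\]
Apply Young's inequality to the cross term, $\tfrac{\tau}{2}ab \le \tfrac{\tau}{8}a^2 + \tfrac{\tau}{2}b^2$. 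This leaves
\[
\Phi^{t+1}\le \Phi^t - \tfrac{3\tau}{8}\|\nabla\Phi^t\|^2 + \tfrac{3\tau}{4}\|\varepsilon^t\|^2 .
\]
Now use (b) on the gradient term and (c) on the error term, $\|\varepsilon^t\|^2 \le 4\eta^2C^4(\Phi^t)^2$:
\[
\Phi^{t+1} \le \Phi^t\bigl(1 - \tfrac{3\lambda c^2\tau}{8} + 3\tau\eta^2C^4\Phi^t\bigr).
\]
If $\Phi^t \le \lambda c^2/(8\eta^2C^4)$, the bracket is at most $1-\tfrac{3}{8}\lambda c^2\tau+\tfrac38\lambda c^2\tau$. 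This cancels completely, so the constants must be tightened. I would take the Young split as $\tfrac{\tau}{4}a^2 + \tfrac{\tau}{4}b^2$, or redistribute the slack differently, so that the error coefficient becomes $\le \tfrac{\tau}{8}\lambda c^2$ under the stated hypothesis, giving $\gamma = 1-\tfrac{\lambda c^2\tau}{4}$. Getting the exact constants $\tfrac18$ and $\tfrac14$ to match is bookkeeping. Since $\Phi$ then decreases, the smallness condition $\Phi^t\le \lambda c^2/(8\eta^2C^4)$ propagates inductively. The bound $\gamma\in(0,1)$ follows because (b) together with $\|\nabla\Phi\|^2\le L_\Phi^2\cdot$ (distance)$^2$-type bounds forces $\lambda c^2\le 2L_\Phi$, which I expect to be routine but must be checked.

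\emph{Staying in $U$ (main obstacle).} This is the discrete analogue of the bootstrap in Theorem~\ref{thm:local_convergence}, and it is where I expect the most care. The step satisfies
\[
\|k^{t+1}-k^t\| \le \tau\|\nabla\Phi^t\| + \tfrac{\tau}{2}\|\varepsilon^t\| \le \tau C'\sqrt{\Phi^t},
\]
using $\|\nabla\Phi\|\le C_2\|r\|$ and $\Phi^t$ bounded. Hence
\[
\|k^t-k^0\| \le \tau C'\sqrt{\Phi^0}\sum_j \gamma^{j/2} \le \tau C'\sqrt{\Phi^0}\cdot\frac{2}{1-\gamma} \le \frac{16C'}{\lambda c^2}\sqrt{\Phi^0},
\]
using $1-\sqrt\gamma\ge(1-\gamma)/2$. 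The key point is that $\tau$ cancels in this bound. With $\sqrt{\Phi^0}\le C_3\|k^0-k^*\|/\sqrt2 < C_3\delta$, choose $\delta$ so that $\delta + 16C'C_3\delta/(\lambda c^2)$ is less than the radius of $U$. One must also check that $k^{t+1}$ lies in $U$ before the descent lemma is applied along the segment. Induction handles this: the same sum bound, applied up to $t+1$, places $k^{t+1}$ inside the ball of radius $<$ the radius of $U$ using only the estimates at steps $\le t$, so all iterates remain in $U$ and the contraction holds for all $t\ge 0$.
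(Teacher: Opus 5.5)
Your route is the paper's: descent lemma, PL inequality from coercivity, the bound $\|\varepsilon^t\|\le 2\eta C^2\Phi^t$, and geometric summation of the steps. Your trajectory argument is more careful than the paper's. You note that $\tau/(1-\sqrt{\gamma})\le 8/(\lambda c^2)$, so $\delta$ does not depend on $\tau$. You also place $k^{t+1}$ in $U$ before applying the descent lemma on $[k^t,k^{t+1}]$. To make $C'$ uniform in $\eta$ as well, use the threshold to get $\eta\sqrt{\Phi^t}\le\sqrt{\lambda c^2/8}/C^2$, rather than appealing to boundedness of $\Phi^t$.

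The gap is the one-step contraction. This is the quantitative content of the theorem, and it is not mere bookkeeping. With your constant in (b), $\|\nabla\Phi\|^2\ge\lambda c^2\Phi$, your proposed split $\tfrac{\tau}{2}ab\le\tfrac{\tau}{4}a^2+\tfrac{\tau}{4}b^2$ gives $\Phi^{t+1}\le\Phi^t\bigl(1-\tfrac{\lambda c^2\tau}{4}+2\tau\eta^2C^4\Phi^t\bigr)$. The hypothesis $\Phi^t\le\lambda c^2/(8\eta^2C^4)$ lets the last term reach $\tfrac{\lambda c^2\tau}{4}$, so the bracket is again only $\le 1$.

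No other Young weight rescues this. Under your (b), the threshold yields $\|\varepsilon^t\|^2\le\tfrac{\lambda c^2}{2}\Phi^t\le\tfrac12\|\nabla\Phi^t\|^2$. If the coefficients $-\tfrac{\tau}{2}$, $\tfrac{\tau}{2}$, $\tfrac{\tau}{8}$ are bounded separately (using even the sharp $\tfrac{\tau}{8}$), the best available bound is $\bigl(-\tfrac{1}{2}+\tfrac{1}{2\sqrt2}+\tfrac{1}{16}\bigr)\tau\|\nabla\Phi^t\|^2\approx-0.084\,\tau\|\nabla\Phi^t\|^2$. That is well short of the required $-\tfrac{\tau}{4}\lambda c^2\Phi^t$. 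The root cause is the factor $2$ you gave away in (b). The threshold $\lambda c^2/(8\eta^2C^4)$ in the statement is calibrated to the PL constant $\|\nabla\Phi\|^2\ge 2\lambda c^2\Phi$, which is what the paper uses on $U$.

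First repair the descent step. Your displayed inequality is true, but not via $\|a+b\|^2\le 2\|a\|^2+2\|b\|^2$: at $\tau=1/L_\Phi$ that leaves the coefficient $-\tau+L_\Phi\tau^2=0$ on $\|\nabla\Phi^t\|^2$. Expand exactly instead, $\|s\|^2=\tau^2\|\nabla\Phi^t\|^2-\tau^2\langle\nabla\Phi^t,\varepsilon^t\rangle+\tfrac{\tau^2}{4}\|\varepsilon^t\|^2$. This gives coefficients $-\tau+\tfrac{L_\Phi\tau^2}{2}\le-\tfrac{\tau}{2}$ on $\|\nabla\Phi^t\|^2$, $\tfrac{\tau(1-L_\Phi\tau)}{2}$ on the cross term, and $\tfrac{L_\Phi\tau^2}{8}\le\tfrac{\tau}{8}$ on $\|\varepsilon^t\|^2$. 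Then either of two routes works.

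\textbf{(i)} Follow the paper. Split the cross term as $\tfrac{\tau}{4}(\|\nabla\Phi^t\|^2+\|\varepsilon^t\|^2)$ to get $\Phi^{t+1}\le\Phi^t-\tfrac{\tau}{4}\|\nabla\Phi^t\|^2+\tfrac{3\tau}{8}\|\varepsilon^t\|^2$. Shrink $U$ so that continuity costs only a factor $1-\epsilon$ in (b), i.e.\ $\|\nabla\Phi\|^2\ge 2(1-\epsilon)\lambda c^2\Phi$. This gives $\Phi^{t+1}\le\bigl(1-(1-\epsilon)\tfrac{\lambda c^2\tau}{2}+\tfrac{3\lambda c^2\tau}{16}\bigr)\Phi^t\le\gamma\Phi^t$ for $\epsilon\le\tfrac18$. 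The paper takes $\epsilon=0$, rounds $\tfrac{3\tau}{8}$ up to $\tfrac{\tau}{2}$, and absorbs $2\tau\eta^2C^4(\Phi^t)^2\le\tfrac{\lambda c^2\tau}{4}\Phi^t$ into $-\tfrac{\lambda c^2\tau}{2}\Phi^t$.

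\textbf{(ii)} Keep your factor $\tfrac12$, but do not bound the three coefficients separately. Set $\theta:=\tau L_\Phi\in(0,1]$ and use $\|\varepsilon^t\|\le\|\nabla\Phi^t\|/\sqrt2$. Then $\Phi^{t+1}-\Phi^t\le\tau\|\nabla\Phi^t\|^2\bigl(-1+\tfrac{\theta}{2}+\tfrac{1-\theta}{2\sqrt2}+\tfrac{\theta}{16}\bigr)\le-\tfrac{7}{16}\tau\|\nabla\Phi^t\|^2$. This holds because the bracket is increasing in $\theta$ and equals $-\tfrac{7}{16}$ at $\theta=1$.
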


\begin{proof}
Shrink $U$ so that $\nabla \Phi$ is $L_\Phi$-Lipschitz on $U$, the norm bound $\|\Delta y(r)\| \le C\|r\|$ holds uniformly, and the coercivity estimate of Theorem~\ref{thm:EP_coercivity} gives the PL inequality
\begin{equation}\label{eq:PL}
\|\nabla \Phi(k)\|^2 \ge 2\lambda c^2\,\Phi(k).
\end{equation}
(Equation \eqref{eq:PL} is Lemma~\ref{lem:coercivity_iff} applied with $\tilde r = r$, combined with $\Phi = \tfrac{1}{2}\|r\|^2$ and the explicit constant $c = \sigma_{\min}(\mathscr{D}'^\top\hat{L}^{-1}\hat{Q})$ coming from the decomposition at the end of Theorem~\ref{thm:EP_coercivity}'s proof.) Abbreviate $\Phi^t := \Phi(k^t)$ and $\delta k := k^{t+1} - k^t = -\tau \nabla \Phi^t + \tfrac{\tau}{2}\varepsilon^t$. The descent lemma gives
\[
\Phi^{t+1} - \Phi^t \le \langle \nabla\Phi^t, \delta k\rangle + \tfrac{L_\Phi}{2}\|\delta k\|^2.
\]
Substituting the update and collecting terms,
\[
\Phi^{t+1} - \Phi^t \le \bigl(-\tau + \tfrac{L_\Phi\tau^2}{2}\bigr)\|\nabla\Phi^t\|^2 + \tfrac{\tau(1 - L_\Phi\tau)}{2}\,\langle \nabla\Phi^t, \varepsilon^t\rangle + \tfrac{L_\Phi\tau^2}{8}\|\varepsilon^t\|^2.
\]
For $\tau \le 1/L_\Phi$, the coefficient on $\|\nabla\Phi^t\|^2$ is at most $-\tau/2$, and Young's inequality applied to the cross term gives $\tfrac{\tau(1 - L_\Phi\tau)}{2}\,|\langle \nabla\Phi^t, \varepsilon^t\rangle| \le \tfrac{\tau}{4}(\|\nabla\Phi^t\|^2 + \|\varepsilon^t\|^2)$. Combining,
\begin{equation}\label{eq:desc_pl_combined}
\Phi^{t+1} \le \Phi^t - \tfrac{\tau}{4}\|\nabla\Phi^t\|^2 + \tfrac{\tau}{2}\|\varepsilon^t\|^2.
\end{equation}
The error term is controlled by $\Phi^t$: using $\sum_e (\Delta_e y^t)^4 \le \bigl(\sum_e (\Delta_e y^t)^2\bigr)^2 = \|\Delta y(r^t)\|^4$ and the norm bound $\|\Delta y(r^t)\| \le C\|r^t\|$,
\[
\|\varepsilon^t\|^2 = \eta^2\,\sum_e (\Delta_e y^t)^4 \le \eta^2\,C^4\|r^t\|^4 = 4\eta^2 C^4\,(\Phi^t)^2.
\]
Substituting this and \eqref{eq:PL} into \eqref{eq:desc_pl_combined},
\[
\Phi^{t+1} \le \bigl(1 - \tfrac{\lambda c^2\tau}{2}\bigr)\,\Phi^t + 2\tau\eta^2 C^4\,(\Phi^t)^2.
\]
Provided $\Phi^t \le \tfrac{\lambda c^2}{8\eta^2 C^4}$, the quadratic correction is at most $\tfrac{\lambda c^2\tau}{4}\Phi^t$, so
\[
\Phi^{t+1} \le \bigl(1 - \tfrac{\lambda c^2\tau}{4}\bigr)\Phi^t = \gamma\,\Phi^t.
\]
Since $\gamma < 1$, $\Phi^t \le \gamma^t\Phi^0 \le \Phi^0$ for all $t$ and the smallness condition on $\Phi^t$ propagates.

It remains to bound the trajectory. From \eqref{eq:EP_discrete} and the PL inequality $\|\nabla\Phi^t\| \le \|\Lambda\|^{1/2}\,C\,\|r^t\| =: C_1\|r^t\|$ (with $C_1$ uniform on $U$),
\[
\|k^{t+1} - k^t\| \le \tau C_1\|r^t\| + \tfrac{\tau\eta C^2}{2}\|r^t\|^2 \le \bigl(\tau C_1\sqrt{2\Phi^0} + \tau\eta C^2\,\Phi^0\bigr)\gamma^{t/2}.
\]
Summing,
\[
\|k^t - k^0\| \le \sum_{i=0}^{t-1}\|k^{i+1} - k^i\| \le \frac{\tau C_1\sqrt{2\Phi^0} + \tau\eta C^2\,\Phi^0}{1 - \sqrt{\gamma}},
\]
uniformly in $t$. Taking $\Phi^0$ (equivalently, $\delta$) small enough makes this bound smaller than $\mathrm{dist}(k^*, \partial U) - \delta$, so $k^t \in U$ for all $t$.
\end{proof}

\paragraph{Coupled Learning}

The CL case requires a small modification. As established in Section~\ref{EP C Laws}, the continuous CL flow is \emph{not} gradient flow of any loss: differentiating the weighted loss $\Phi_D = \tfrac{1}{2}\langle r, Dr\rangle$ along the CL dynamics yields the cubic remainder $\tfrac{1}{2}\langle r, \dot D r\rangle$ in \eqref{eq:PhiD_dot}, which translates at the discrete level into a genuine non-gradient term in the update identity. Writing $y_D^t := \hat{L}^{-1}\hat{Q}D r^t$ and using $\partial_{k_e}\Phi_D = -(\Delta_e x_0)(\Delta_e y_D) + \tfrac{1}{2}(\Delta_e y_D)^2$, a calculation paralleling the EP derivation gives, edgewise,
\begin{equation}\label{eq:CL_discrete}
k_e^{t+1} - k_e^t = \tau\,(\Delta_e x_0^t)(\Delta_e y_D^t) + \tfrac{\tau\eta}{2}\,(\Delta_e y_D^t)^2,
\end{equation}
or equivalently in vector form, $k^{t+1} - k^t = -\tau\,\nabla\Phi_D(k^t) + \tfrac{\tau}{2}\,\varepsilon_{\rm CL}^t$ with $\varepsilon^t_{{\rm CL},e} := (1+\eta)(\Delta_e y_D^t)^2$. The key difference from the EP case is that the correction $\varepsilon_{\rm CL}^t$ is $O(\|r^t\|^2)$ rather than $O(\eta\|r^t\|^2)$: it survives the $\eta \to 0$ limit, reflecting the fact that CL is not exactly gradient flow of $\Phi_D$ even in continuous time. This non-gradient term is precisely what produced the cubic remainder $C_D \Phi_D^{3/2}$ in Theorem~\ref{thm:CL_coercivity}, and it requires the same smallness of the initial loss to control.

\begin{theorem}\label{thm:discrete_CL}
Let $k^* \in \mathcal{S}$ satisfy the coercivity condition of Theorem~\ref{thm:CL_coercivity}, and let $\lambda_D, c_D$ denote the coercivity constants for $\Phi_D$ at $k^*$ (i.e., $\|\nabla\Phi_D\|^2 \ge 2\lambda_D c_D^2\,\Phi_D$ on a neighborhood of $k^*$, from Lemma~\ref{lem:CL_coercivity_iff} and the bounds on $\lambda_{\min}(D)$, $\lambda_{\max}(D)$). There exist a neighborhood $U \ni k^*$, a Lipschitz constant $L_{\Phi_D} > 0$ for $\nabla\Phi_D$ on $U$, a constant $C > 0$ with $\|\Delta y(r)\| \le C\|r\|$ on $U$, and a radius $\delta > 0$ such that for any
\[
0 < \tau \le \tfrac{1}{L_{\Phi_D}}, \qquad \|k^0 - k^*\| < \delta, \qquad \Phi_D(k^0) \le \tfrac{\lambda_D c_D^2}{8(1+\eta)^2 C^4},
\]
the iterates of \eqref{eq:CL_discrete} remain in $U$ for all $t \ge 0$ and satisfy
\[
\Phi_D(k^{t+1}) \le \gamma_D\,\Phi_D(k^t), \qquad \gamma_D := 1 - \tfrac{\lambda_D c_D^2\,\tau}{4} \in (0,1).
\]
\end{theorem}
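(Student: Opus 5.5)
The plan is to follow the proof of Theorem~\ref{thm:discrete_EP} essentially line by line, with $\Phi_D$ in place of $\Phi$ and $\varepsilon_{\rm CL}^t$ in place of $\varepsilon^t$.

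First, shrink $U$ around $k^*$ so that four properties hold on $U$:
\begin{itemize}
\item $\nabla\Phi_D$ is $L_{\Phi_D}$-Lipschitz, which is possible because $\Phi_D$ is rational in $k$ and hence smooth on $\{k>0\}$;
\item $D$ has eigenvalues bounded above and below;
\item the bound $\|\Delta y_D(r)\|\le C\|r\|$ holds uniformly, where $y_D=\hat L^{-1}\hat Q D r$;
\item the PL inequality $\|\nabla\Phi_D\|^2\ge 2\lambda_D c_D^2\Phi_D$ from Lemma~\ref{lem:CL_coercivity_iff} and Theorem~\ref{thm:CL_coercivity} holds.
\end{itemize}
Note that if $C$ is stated for $y(r)$ rather than $y_D$, we absorb $\lambda_{\max}(D)$ into $C$.

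Second, write the update as $\delta k=-\tau\nabla\Phi_D^t+\tfrac{\tau}{2}\varepsilon_{\rm CL}^t$, using the edgewise identity \eqref{eq:CL_discrete}. Apply the descent lemma for the $L_{\Phi_D}$-smooth function $\Phi_D$. With $\tau\le 1/L_{\Phi_D}$ and Young's inequality on the cross term, exactly as in the EP proof, this gives
\[
\Phi_D^{t+1}\le\Phi_D^t-\tfrac{\tau}{4}\|\nabla\Phi_D^t\|^2+\tfrac{\tau}{2}\|\varepsilon_{\rm CL}^t\|^2 .
\]
Next bound the error: $\|\varepsilon_{\rm CL}^t\|^2=(1+\eta)^2\sum_e(\Delta_e y_D^t)^4\le(1+\eta)^2C^4\|r^t\|^4$. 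We want this in terms of $\Phi_D$, so use $\|r\|^2\le 2\Phi_D/\lambda_{\min}(D)$. To match the stated constant $\tfrac{\lambda_Dc_D^2}{8(1+\eta)^2C^4}$, we normalize $C$ so that $\|\Delta y_D\|^2\le 2C^2\Phi_D$, that is, we absorb $\lambda_{\min}(D)^{-1/2}$ into $C$. Then $\|\varepsilon^t_{\rm CL}\|^2\le 4(1+\eta)^2C^4(\Phi_D^t)^2$. Combining this with PL gives
\[
\Phi_D^{t+1}\le(1-\tfrac{\lambda_Dc_D^2\tau}{2})\Phi_D^t+2\tau(1+\eta)^2C^4(\Phi_D^t)^2 .
\]
Under the smallness hypothesis, the quadratic term is at most $\tfrac{\lambda_Dc_D^2\tau}{4}\Phi_D^t$, which yields the contraction by $\gamma_D$. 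Because $\Phi_D^t$ is nonincreasing, the smallness hypothesis propagates by induction. We also need $\gamma_D\in(0,1)$. This follows from $\tau\le 1/L_{\Phi_D}$ together with $2\lambda_Dc_D^2\Phi_D\le\|\nabla\Phi_D\|^2\le 2L_{\Phi_D}\Phi_D$ near a minimizer, which gives $\lambda_Dc_D^2\le L_{\Phi_D}$.

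Third, confine the trajectory to $U$, which is the real obstacle because the induction above assumes $k^t\in U$. Run a joint induction: assume $k^0,\dots,k^t\in U$. Bound each step by
\[
\|k^{i+1}-k^i\|\le\tau\|\nabla\Phi_D^i\|+\tfrac{\tau}{2}\|\varepsilon^i_{\rm CL}\| .
\]
Here $\|\nabla\Phi_D^i\|\le C_1\|r^i\|$ with $C_1$ uniform on $U$, since the gradient is built from $\Delta x_0$, $\Delta y_D$ and the quadratic term $(\Delta y_D)^2$. Also $\|\varepsilon^i_{\rm CL}\|\lesssim\|r^i\|^2$. Since $\|r^i\|\lesssim\sqrt{\Phi_D^i}\le\gamma_D^{i/2}\sqrt{\Phi_D^0}$, summing the geometric series gives
\[
\|k^{t+1}-k^0\|\lesssim \frac{\tau\sqrt{\Phi_D^0}}{1-\sqrt{\gamma_D}}.
\]
Since $1-\sqrt{\gamma_D}\ge\tfrac{\lambda_Dc_D^2\tau}{8}$, this bound is $O(\sqrt{\Phi_D^0}/(\lambda_Dc_D^2))$, independent of $\tau$. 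Finally, $\Phi_D^0\le\tfrac12\lambda_{\max}(D)C_3^2\|k^0-k^*\|^2$ because $r(k^*)=0$. Choosing $\delta$ small therefore keeps $k^{t+1}$ within $\mathrm{dist}(k^*,\partial U)$, which also enforces the smallness hypothesis on $\Phi_D(k^0)$ and closes the induction.
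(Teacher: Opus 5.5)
Your proposal is correct and is essentially the paper's argument: the paper proves this theorem by running the template of Theorem~\ref{thm:discrete_EP}, with $(1+\eta)^2$ replacing $\eta^2$ in the error bound. That template is the descent lemma, Young's inequality on the cross term, the PL inequality, smallness of $\Phi_D$ to absorb the quadratic error, and a geometric-series bound confining the iterates to $U$. Your joint induction on $k^t\in U$, the $\tau$-independent displacement bound via $1-\sqrt{\gamma_D}\ge \lambda_D c_D^2\tau/8$, and the explicit absorption of $\lambda_{\max}(D)$ and $\lambda_{\min}(D)^{-1/2}$ into $C$ make precise points that the paper's sketch leaves implicit.
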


The proof follows the template of Theorem~\ref{thm:discrete_EP}. The only substantive change is in the bound on the error term: since the correction with components $\varepsilon^t_{{\rm CL},e} = (1+\eta)(\Delta_e y_D^t)^2$ is $O(\|r^t\|^2)$ independent of $\eta$, the smallness hypothesis on $\Phi_D(k^0)$ involves $(1+\eta)^2$ rather than $\eta^2$. In particular, unlike the EP case, the smallness is not relieved by taking $\eta$ small. This is the discrete analogue of the cubic $\Phi_D^{3/2}$ term in Theorem~\ref{thm:CL_coercivity}: the non-gradient part of CL must be controlled by starting near $\mathcal{S}$, not merely by making the nudge small.

\paragraph{Adjoint Coupled Learning}

As noted in Section~\ref{EP C Laws}, AL can be reinterpreted as EP with the active edge set defined according to the reference state $x_0$ rather than the free state. Theorem~\ref{thm:discrete_EP} therefore applies verbatim to AL, provided the constants $\lambda, c, C$ are computed with respect to this active edge set.
\section*{Funding}
XL and YM were partially supported by NSF through the University of Pennsylvania Materials Research Science and Engineering Center (MRSEC) (DMR-2309043).

\section*{Acknowledgments}
The authors would like to thank Han Zhou and Te-Sheng Lin for helpful discussions.

\section*{Disclosure of AI use}
AI-assisted tools (Claude, Anthropic) were used for editing, formatting, and literature search during the preparation of this manuscript. All mathematical content was developed by the authors.

\section*{Data availability}
No experimental data were used in this work. The Python scripts used to generate Figures~\ref{fig:coercivity_heatmap} and~\ref{fig:convergence_semilog} are available from the corresponding author upon request.

\end{document}